\begin{document}

\numberwithin{equation}{section}
\newtheorem{theorem}{Theorem}[section]
\newtheorem{corollary}{Corollary}[section]
\newtheorem{definition}{Definition}[section]
\newtheorem{lemma}{Lemma}[section]
\newtheorem{proposition}{Proposition}[section]
\newtheorem{remark}{Remark}[section]
\newtheorem{example}{Example}[section]
\newtheorem{claim}{Claim}[section]
\newtheorem{problem}{Problem}[section]

\newcommand{\real}{{\mathbb R}}
\newcommand{\nat}{{\mathbb N}}
\newcommand{\ent}{{\mathbb Z}}
\newcommand{\com}{{\mathbb C}}
\newcommand{\un}{1\mkern -4mu{\textrm l}}
\newcommand{\FF}{{\mathbb F}}
\newcommand{\T}{{\mathbb T}}
\newcommand{\A}{{\mathcal A}}
\newcommand{\B}{{\mathcal B}}
\newcommand{\E}{{\mathcal E}}
\newcommand{\F}{{\mathcal F}}
\newcommand{\G}{{\mathcal G}}
\newcommand{\M}{{\mathcal M}}
\newcommand{\N}{{\mathcal N}}
\newcommand{\R}{{\mathcal R}}
\newcommand{\U}{{\mathcal U}}
\newcommand{\BMO}{{\mathcal {BMO}}}
\newcommand{\g}{\gamma}
\newcommand{\Ga}{\Gamma}
\newcommand{\D}{\Delta}
\newcommand{\e}{\varepsilon}
\newcommand{\f}{\varphi}
\newcommand{\La}{\Lambda}
\newcommand{\s}{\sigma}
\newcommand{\Tr}{\mbox{\rm Tr}}
\newcommand{\tr}{\mbox{\rm tr}}
\newcommand{\ot}{\otimes}
\newcommand{\op}{\oplus}
\newcommand{\8}{\infty}
\newcommand{\el}{\ell}
\newcommand{\pa}{\partial}
\newcommand{\la}{\langle}
\newcommand{\ra}{\rangle}
\newcommand{\wt}{\widetilde}
\newcommand{\wh}{\widehat}
\newcommand{\n}{\noindent}
\newcommand{\bv}{\bigvee}
\newcommand{\w}{\wedge}
\newcommand{\bw}{\bigwedge}
\newcommand{\pf}{\noindent{\it Proof.~~}}
\newcommand{\ep}{\varepsilon}
\newcommand{\Om}{\Omega}
\renewcommand{\H}{{\mathcal H}}
\renewcommand{\P}{{\mathcal P}}
\renewcommand{\L}{{\mathcal L}}
\renewcommand{\S}{{\mathcal S}}
\renewcommand{\a}{\alpha}
\renewcommand{\b}{\beta}
\renewcommand{\d}{\delta}
\renewcommand{\th}{\theta}
\renewcommand{\l}{\lambda}
\renewcommand{\e}{\varepsilon}
\renewcommand{\O}{{\Omega}}
\renewcommand{\o}{{\omega}}
\renewcommand{\t}{\tau}
\renewcommand{\v}{\vee}
\newcommand{\h}{\mathsf{h}}
\newcommand{\at}{\mathrm{at}}
\newcommand{\cqd}{\hfill$\Box$}
\newcommand{\MO}{\mathcal{MO}}
\newcommand{\cond}{\mathrm{cond}}
\newcommand{\bmo}{\mathsf{bmo}}
\newcommand{\mo}{\mathsf{mo}}

\newcommand{\be}{\begin{eqnarray*}}
\newcommand{\ee}{\end{eqnarray*}}
\newcommand{\beq}{\begin{equation}}
\newcommand{\eeq}{\end{equation}}
\newcommand{\beqn}{\begin{equation*}}
\newcommand{\eeqn}{\end{equation*}}
\newcommand{\bs}{\begin{split}}
\newcommand{\es}{\end{split}}

\title{Real-variable characterizations\\ of Bergman spaces}

\thanks{{\it 2010 Mathematics Subject Classification}:\; 46E15; 32A36, 32A50.}
\thanks{{\it Key words}:\; Bergman space, Bergman metric, Maximal function, Area function, Bergman integral operator.}

\author{Zeqian Chen}

\address{Wuhan Institute of Physics and Mathematics, Chinese
Academy of Sciences, 30 West District, Xiao-Hong-Shan, Wuhan 430071, China}

\author{Wei Ouyang}

\address{Wuhan Institute of Physics and Mathematics, Chinese
Academy of Sciences, 30 West District, Xiao-Hong-Shan, Wuhan 430071, China
and Graduate School, Chinese Academy of Sciences, Wuhan 430071, China}

\date{}
\maketitle

\markboth{Z. Chen and W.Ouyang}
{Bergman spaces}

\begin{abstract}
In this paper, we give a survey of results obtained recently by the present authors on real-variable characterizations of Bergman spaces, which are closely related to maximal and area integral functions in terms of the Bergman metric. In particular, we give a new proof of those results concerning area integral characterizations through using the method of vector-valued Calder\'{o}n-Zygmund operators to handle Bergman singular integral operators on the complex ball. The proofs involve some sharp estimates of the Bergman kernel function and Bergman metric.
\end{abstract}


\section{Introduction}\label{intro}

There is a mature and powerful real variable Hardy space theory which has distilled some of the essential oscillation and cancellation
behavior of holomorphic functions and then found that behavior ubiquitous. A good introduction to that is \cite{CW}; a more recent and fuller account is in \cite{AB, GL, Stein1993} and references therein. However, the real-variable theory of the Bergman space is less well developed, even in the case of the unit disc (cf. \cite{DS2004}).

Recently, in \cite{CO1} the present authors established real-variable type maximal and area integral characterizations of Bergman spaces in the unit ball of $\mathbb{C}^n.$ The characterizations are in terms of maximal functions and area functions on Bergman balls involving the radial derivative, the complex gradient, and the invariant gradient. Subsequently, in \cite{CO2} we introduced a family of holomorphic spaces of tent type in the unit ball of $\mathbb{C}^n$ and showed that those spaces coincide with Bergman spaces. Moreover, the characterizations extend to cover Besov-Sobolev spaces. A special case of this is a characterization of $\mathrm{H}^p$ spaces involving only area functions on Bergman balls.

We remark that the first real-variable characterization of the Bergman spaces was presented by Coifman and Weiss in 1970's. Recall that
\be
\varrho (z, w) = \left \{\begin{split}
& \big | |z| - |w| \big | + \Big | 1 - \frac{1}{|z| |w|} \langle z, w \rangle \Big |,\quad \text{if}\; z, w \in \mathbb{B}_n \backslash \{0\},\\
& |z| + |w|,\quad \text{otherwise}
\end{split} \right.
\ee
is a pseudo-metric on $\mathbb{B}_n$ and $(\mathbb{B}_n, \varrho, d v_{\alpha})$ is a homogeneous space. By their theory of harmonic analysis on homogeneous spaces, Coifman and Weiss \cite{CW} can use $\varrho$ to obtain a real-variable atomic decomposition for Bergman spaces. However, since the Bergman metric $\beta$ underlies the complex geometric structure of the unit ball of $\mathbb{C}^n,$ one would prefer to real-variable characterizations of the Bergman spaces in terms of $\beta.$ Clearly, the results obtained in \cite{CO1, CO2} are such a characterization.

In this paper, we will give a detailed survey of results obtained in \cite{CO1, CO2}. Moreover, we will give a new proof of those results concerning area integral characterizations through using the method of vector-valued Calder\'{o}n-Zygmund operator theory to handle Bergman singular integral operators on the complex ball. This paper is organized as follows. In Section \ref{Bergmanspace}, some notations and a number of auxiliary (and mostly elementary) facts about the Bergman kernel functions are presented. In Section \ref{AtomDecomp}, we will discuss real-variable type atomic decomposition of Bergman spaces. In particular, we will present the atomic decomposition of Bergman spaces with respect to Carleson tubes that was obtained in \cite{CO1}. Section \ref{RVC} is devoted to present maximal and area integral function characterizations of Bergman spaces. Finally, in Section \ref{BergmanOperator}, we will give a new proof of those results concerning the area integral characterizations obtained in \cite{CO1, CO2} using the argument of Calder\'{o}n-Zygmund operator theory through introducing Bergman singular integral operators on the complex ball.

In what follows, $C$ always denotes a constant depending (possibly) on $n, q, p, \g$ or $\alpha$ but not on $f,$ which may be
different in different places. For two nonnegative (possibly infinite) quantities $X$ and $Y,$ by $X \lesssim Y$ we mean that there
exists a constant $C>0$ such that $ X \leq C Y.$ We denote by $X \thickapprox Y$ when $X \lesssim Y$ and $Y \lesssim X.$ Any notation and terminology not otherwise explained, are as used in \cite{Z} for spaces of holomorphic functions in the unit ball of $\mathbb{C}^n.$

\section{Bergman spaces}\label{Bergmanspace}

Let $\mathbb{C}$ denote the set of complex numbers. Throughout the paper we fix a positive integer $n,$ and let ${\mathbb{B}_n}$ denote the open unit ball in $\mathbb{C}^n.$ The boundary of $\mathbb{B}_n$ will be denoted by $\mathbb{S}_n$ and is called the unit sphere in $\mathbb{C}^n.$ Also, we denote by $\overline{\mathbb{B}}_n$ the closed unit ball, i.e., $\overline{\mathbb{B}}_n= \{z \in \mathbb{C}^n:\; |z| \le 1 \} = \mathbb{B}_n \cup \mathbb{S}_n.$ The automorphism group of $\mathbb{B}^n,$ denoted by $\mathrm{Aut} (\mathbb{B}^n),$ consists of all bi-holomorphic mappings of $\mathbb{B}^n.$ Traditionally, bi-holomorphic mappings are also called automorphisms.

For $\alpha$ $\in$ $\mathbb{R}$, the weighted Lebesgue measure $dv_{\alpha}$ on $\mathbb{B}_n$ is defined by
\be
dv_{\alpha}(z)=c_{\alpha}(1-|z|^2)^{\alpha}dv(z)
\ee
where $c_{\alpha}=1$ for $\alpha\le-1$ and $c_{\alpha}=\Gamma(n+\alpha+1)/ [n!\Gamma(\alpha+1)]$ if $\alpha>-1$, which is a normalizing constant so that $dv_{\alpha}$
is a probability measure on $\mathbb{B}_n.$ In the case of $\alpha=-(n+1)$ we denote the resulting measure by
\be
d\tau(z)=\frac{d v (z)}{(1-|z|^2)^{n+1}},
\ee
and call it the invariant measure on $\mathbb{B}^n,$ since $d\tau=d\tau\circ\varphi$ for any automorphism $\varphi$ of $\mathbb{B}^n.$

Recall that for $\alpha > -1$ and $p>0$ the (weighted) Lebesgue space $L^p_{\alpha} (\mathbb{B}_n)$ (or, $L^p_{\alpha}$ in short) consists
of measurable (complex) functions $f$ on $\mathbb{B}_n$ with
\be
\|f\|_{p,\,\alpha}=\left ( \int_{\mathbb{B}_n}|f(z)|^pdv_{\alpha}(z) \right )^{\frac{1}{p}}<\infty.
\ee
The (weighted) Bergman space $\mathcal{A}^p_{\alpha}$ is then defined as
\be
\mathcal{A}^p_{\alpha} = \mathcal{H} (\mathbb{B}_n) \cap L^p_{\alpha},
\ee
where $\mathcal{H} (\mathbb{B}_n)$ is the space of all holomorphic functions in $\mathbb{B}_n.$ When $\alpha =0$ we simply write $\A^p$ for $\A^p_0.$ These are the usual Bergman spaces. Note that for $1 \le p < \8,$ $\mathcal{A}^p_{\alpha}$ is a Banach space under the norm $\|\ \|_{p,\,\alpha}.$ If $0 < p <1,$ the space $\mathcal{A}^p_{\alpha}$ is a quasi-Banach space with $p$-norm $\| f \|^p_{p, \alpha}.$

Recall that the dual space of $\mathcal{A}^1_{\alpha}$ is the Bloch space $\mathcal{B}$ defined as follows
(we refer to \cite {Z} for details). The Bloch space $\mathcal{B}$ of $\mathbb{B}_n$ is defined to be the space of holomorphic  functions $f$ in ${\mathbb{B}_n}$ such that
\be
\|f\|_{\mathcal B}=\sup\{\,|\widetilde{\nabla}f(z)|:z\in{\mathbb{B}_n}\}<\infty.
\ee
$\|\ \|_{\mathcal B}$ is a semi-norm on $\mathcal{B}.$ $\mathcal{B}$ becomes a Banach space with the following norm
$$\|f\| = |f(0)| + \|f\|_{\mathcal B}.$$
It is known that the Banach dual of $\mathcal A^1_{\alpha}$ can be identified with $\mathcal B$ (with equivalent norms) under the integral pairing
\be
\langle f,g \rangle_{\alpha}= \lim_{r \to 1^-}\int_{\mathbb{B}_n} f(r z) \overline{g(z)} d v_{\alpha}(z),\quad f \in{\mathcal A^1_{\alpha}},\; g \in \mathcal{B}.
\ee
(e.g., see Theorem 3.17 in \cite{Z}.)

We define the so-called generalized Bergman spaces as follows (e.g., \cite{ZZ}). For $0 < p < \8$ and $-\8 < \alpha < \8$ we fix a nonnegative integer $k$ with $pk + \alpha > -1$ and define $\mathcal{A}^p_{\alpha}$ as the space of all $f \in \mathcal{H} (\mathbb{B}_n)$ such that $(1-|z|^2)^k \mathcal{R}^k f \in L^p (\mathbb{B}_n, d v_{\alpha}).$ One then easily observes that $\mathcal{A}^p_{\alpha}$ is independent of the choice of $k$ and consistent with the traditional definition when $\alpha > -1.$ Let $N$ be the smallest nonnegative integer
such that $pN + \alpha > -1$ and define
\beq\label{eq:BergmanSpaceNorm}
\| f \|_{p, \alpha} = | f(0)| + \left ( \int_{\mathbb{B}_n} (1-|z|^2)^{pN} | \mathcal{R}^N f (z) |^p d v_{\alpha} (z) \right )^{\frac{1}{p}},\quad f \in \mathcal{A}^p_{\alpha}\;.
\eeq
Equipped with \eqref{eq:BergmanSpaceNorm}, $\mathcal{A}^p_{\alpha}$ becomes a Banach space when $p \ge 1$ and a quasi-Banach space for $0 < p < 1.$

Note that the family of the generalized Bergman spaces $\mathcal{A}^p_{\alpha}$ covers most of the spaces of holomorphic functions in the unit ball of $\mathbb{C}^n,$ which has been extensively studied before in the literature under different names. For example, $B^s_p = \mathcal{A}^p_{\alpha}$ with $\alpha = - (ps+1),$ where $B^s_p$ is the classical diagonal Besov space consisting of holomorphic functions $f$ in $\mathbb{B}_n$ such that $(1-|z|^2)^{k-s} \mathcal{R}^k f$ belongs to $L^p(\mathbb{B}_n, d v_{-1})$ with $k$ being any positive integer greater than $s.$ It is clear that $\mathcal{A}^p_{\alpha} = B^s_p$ with $s = - (\alpha + 1)/p.$ Thus the generalized Bergman spaces $\mathcal{A}^p_{\alpha}$ are exactly the diagonal Besov spaces. On the other hand, if $k$ is a positive integer, $p$ is positive, and $\beta$ is real, then there is the Sobolev space $W^p_{k,\beta}$ consisting of holomorphic functions $f$ in $\mathbb{B}_n$ such that the partial derivatives of $f$ of order up to $k$ all belong to $L^p ( \mathbb{B}_n, d v_{\beta})$ (cf. \cite{AC, A, BB}). It is easy to see that these holomorphic Sobolev spaces are in the scale of the generalized Bergman spaces, i.e., $W^p_{k,\beta} = \mathcal{A}^p_{\alpha}$ with $\alpha = - (pk - \beta +1)$ (e.g., see \cite{ZZ} for an overview). We refer to Arcozzi-Rochberg-Sawyer \cite{ARS2006, ARS2008}, Tchoundja \cite{T2008} and Volberg-Wick \cite{VW} for some recent results on such Besov spaces and more references.

Recall that $D(z,\gamma)$ denotes the Bergman metric ball at $z$
\be
D(z, \gamma) = \{w \in \mathbb{B}_n\;: \; \beta (z, w) < \g \}
\ee
with $\gamma >0,$ where $\beta$ is the Bergman metric on $\mathbb{B}_n.$ It is known that
\be
\beta (z, w) = \frac{1}{2} \log \frac{1 + | \varphi_z (w)|}{1 - | \varphi_z (w)|},\quad z, w \in \mathbb{B}_n,
\ee
whereafter $\varphi _z$ is the bijective holomorphic mapping in $\mathbb{B}_n,$ which satisfies $\varphi _z (0)=z$, $\varphi _z(z)=0$ and
$\varphi _z\circ\varphi _z = id.$ If $\mathbb{B}_n$ is equipped with the Bergman metric $\beta,$ then $\mathbb{B}_n$ is a separable metric space. We shall call
$\mathbb{B}_n$ a separable metric space instead of $(\mathbb{B}_n,\beta).$

For reader's convenience we collect some elementary facts on the Bergman metric and holomorphic functions in the unit ball of $\mathbb{C}^n.$

\begin{lemma}\label{le:Estimation1} (cf. Lemma 1.24 in \cite{Z})
For any real $\alpha$ and positive $\g$ there exist constant $C_{\gamma}$ such that
\be
C_{\gamma}^{-1}(1-|z|^2)^{n+1+\alpha}\le v_{\alpha}(D(z,\g))\le C_{\gamma}(1-|z|^2)^{n+1+\alpha}
\ee
for all $z\in\mathbb{B}_n$.
\end{lemma}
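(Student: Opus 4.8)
The plan is to reduce the estimate to an integral over a fixed Euclidean ball by using the automorphism $\varphi_z$ together with the invariance of the Bergman metric. First I would note that since $\beta$ is invariant under $\mathrm{Aut}(\mathbb{B}_n)$ and $\varphi_z(z)=0$, we have $\beta(z,w)=\beta(0,\varphi_z(w))$. Because $\beta(0,u)=\frac{1}{2}\log\frac{1+|u|}{1-|u|}$ depends only on $|u|$, the ball $D(0,\gamma)$ is exactly the Euclidean ball $r\mathbb{B}_n$ with $r=\tanh\gamma\in(0,1)$; since $\varphi_z$ is an involution, it follows that $D(z,\gamma)=\varphi_z(r\mathbb{B}_n)$.

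Next I would change variables by $w=\varphi_z(u)$, $u\in r\mathbb{B}_n$, using the two standard transformation identities
\[
1-|\varphi_z(u)|^2=\frac{(1-|z|^2)(1-|u|^2)}{|1-\langle z,u\rangle|^2},\qquad (J_{\mathbb{R}}\varphi_z)(u)=\left(\frac{1-|z|^2}{|1-\langle z,u\rangle|^2}\right)^{n+1},
\]
where $J_{\mathbb{R}}\varphi_z$ denotes the (positive) real Jacobian. Substituting these into $v_\alpha(D(z,\gamma))=c_\alpha\int_{D(z,\gamma)}(1-|w|^2)^\alpha\,dv(w)$ yields
\[
v_\alpha(D(z,\gamma))=c_\alpha\,(1-|z|^2)^{n+1+\alpha}\int_{r\mathbb{B}_n}\frac{(1-|u|^2)^\alpha}{|1-\langle z,u\rangle|^{2(n+1+\alpha)}}\,dv(u),
\]
so that the entire $z$-dependence outside the integral has been extracted as the desired power $(1-|z|^2)^{n+1+\alpha}$.

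It then remains to bound the remaining integral from above and below by positive constants depending only on $\alpha$ and $\gamma$ (through $r$). The essential observation is that for $u\in r\mathbb{B}_n$ and any $z\in\mathbb{B}_n$ one has $1-r\le 1-|z||u|\le|1-\langle z,u\rangle|\le 1+|z||u|\le 1+r$, so the kernel $|1-\langle z,u\rangle|^{-2(n+1+\alpha)}$ is pinched between two positive constants independent of $z$. Likewise $1-r^2\le 1-|u|^2\le 1$ on $r\mathbb{B}_n$, so $(1-|u|^2)^\alpha$ is bounded above and below by positive constants depending on $\alpha$ and $r$. Hence the integral is comparable to $\int_{r\mathbb{B}_n}(1-|u|^2)^\alpha\,dv(u)$, a finite positive number, and absorbing all these bounds into a single constant $C_\gamma$ gives both inequalities simultaneously. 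Note that no integrability hypothesis on $\alpha$ is needed, since $(1-|u|^2)^\alpha$ stays bounded away from $0$ and $\infty$ on the fixed ball $r\mathbb{B}_n$.

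The only genuinely delicate point is getting the normalizations in the two transformation identities exactly right --- in particular that the real Jacobian carries the exponent $n+1$, which is precisely what combines with $(1-|u|^2)^\alpha$ to produce the total power $n+1+\alpha$ --- and checking that the resulting bounds are uniform in $z$. As an alternative I would remark that one can argue through the invariant measure directly: since $d\tau\circ\varphi_z=d\tau$, the quantity $\tau(D(z,\gamma))=\tau(r\mathbb{B}_n)$ depends only on $\gamma$, and writing $dv_\alpha=c_\alpha(1-|w|^2)^{n+1+\alpha}\,d\tau$ together with the comparability $1-|w|^2\approx 1-|z|^2$ on $D(z,\gamma)$ recovers the conclusion more quickly, at the cost of invoking that comparability as a separate fact.
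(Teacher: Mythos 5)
Your argument is correct and is essentially the standard proof of this fact; the paper itself gives no proof but cites Lemma 1.24 of \cite{Z}, whose argument is exactly your main route (pull back by $\varphi_z$, use the identities for $1-|\varphi_z(u)|^2$ and the real Jacobian, and note that $|1-\langle z,u\rangle|$ and $1-|u|^2$ are pinched between constants on $r\mathbb{B}_n$ with $r=\tanh\gamma$). Your closing remark via the invariant measure $d\tau$ and Lemma \ref{le:Estimation2} is also a valid shortcut, and all steps, including the observation that no integrability restriction on $\alpha$ is needed, check out.
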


\begin{lemma}\label{le:Estimation2} (cf. Lemma 2.20 in \cite{Z})
For each $\gamma>0,$
\be
1-|a|^2 \approx 1-|z|^2 \approx |1-\langle a,z\rangle|
\ee
for all $a$ in $\mathbb{B}_n$ with $z \in D(a, \gamma).$
\end{lemma}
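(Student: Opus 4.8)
The plan is to reduce everything to the single M\"obius-invariance identity for the involution $\varphi_a$, namely
\[
1 - |\varphi_a(z)|^2 = \frac{(1 - |a|^2)(1 - |z|^2)}{|1 - \langle z, a\rangle|^2},
\]
which is the standard computation underlying the invariance of the Bergman metric $\beta.$ First I would translate the hypothesis $z \in D(a, \gamma)$ into a bound on $|\varphi_a(z)|$: since $\beta(a,z) = \frac{1}{2}\log\frac{1 + |\varphi_a(z)|}{1 - |\varphi_a(z)|},$ the condition $\beta(a,z) < \gamma$ is exactly $|\varphi_a(z)| < r$ with $r = \tanh\gamma \in (0,1).$ Feeding this into the identity yields the two-sided bound
\[
1 - r^2 \le \frac{(1 - |a|^2)(1 - |z|^2)}{|1 - \langle z, a\rangle|^2} \le 1,
\]
the right inequality being trivial and the left coming from $|\varphi_a(z)|^2 < r^2.$ Note that $|1 - \langle z, a\rangle| = |1 - \langle a, z\rangle|$ by conjugation.

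Second, I would record the elementary universal lower bounds, valid for all $a, z \in \mathbb{B}_n$ with no metric restriction:
\[
|1 - \langle a, z\rangle| \ge 1 - |a||z| \ge 1 - |a| \ge \tfrac{1}{2}(1 - |a|^2),
\]
and symmetrically $|1 - \langle a, z\rangle| \ge \frac{1}{2}(1 - |z|^2).$ These hand us the ``$\gtrsim$'' halves of the comparisons with $|1 - \langle a, z\rangle|$ for free.

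Third, I would combine the two. From the displayed two-sided bound one has $|1 - \langle a, z\rangle|^2 \le (1-r^2)^{-1}(1-|a|^2)(1-|z|^2);$ pairing this with the lower bound $|1 - \langle a, z\rangle| \ge \frac{1}{2}(1-|z|^2)$ and cancelling one factor of $1 - |z|^2$ gives $1 - |z|^2 \lesssim 1 - |a|^2,$ while the symmetric pairing gives the reverse, establishing $1 - |a|^2 \approx 1 - |z|^2.$ Substituting this equivalence back into the upper bound for $|1 - \langle a, z\rangle|^2$ yields $|1 - \langle a, z\rangle| \lesssim 1 - |a|^2,$ which together with the universal lower bound closes the loop and delivers all three comparisons at once; every constant depends only on $r,$ hence only on $\gamma.$

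The only genuine ingredient is the M\"obius identity invoked in the first step; once it is in hand, each remaining estimate is a one-line manipulation of nonnegative reals. The main obstacle is therefore conceptual rather than computational: recognizing that the Bergman-ball hypothesis should be converted into control of the \emph{single} quantity $1 - |\varphi_a(z)|^2,$ and that this quantity already encodes all three equivalences simultaneously. I would expect Lemma~\ref{le:Estimation1} to be an immediate consequence once the comparison $1 - |a|^2 \approx |1-\langle a,z\rangle|$ is available, since volumes of Bergman balls are governed by $(1-|z|^2)^{n+1+\alpha}.$
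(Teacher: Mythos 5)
Your proof is correct, and it follows essentially the same route as the source the paper cites for this lemma (Lemma 2.20 in Zhu's book): convert $\beta(a,z)<\gamma$ into $|\varphi_a(z)|<\tanh\gamma$, apply the identity $1-|\varphi_a(z)|^2=(1-|a|^2)(1-|z|^2)/|1-\langle a,z\rangle|^2$, and close the loop with the elementary bounds $|1-\langle a,z\rangle|\ge\tfrac12(1-|a|^2)$ and $|1-\langle a,z\rangle|\ge\tfrac12(1-|z|^2)$. Nothing to add.
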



\begin{lemma}\label{le:Estimation4} (cf. Lemma 2.27 in \cite{Z})
For each $\gamma>0,$
\be
|1-\langle z,u\rangle| \approx |1-\langle z,v\rangle|
\ee
for all $z$ in $\bar{\mathbb{B}}_n$ and $u,v$ in $\mathbb{B}_n$ with $\beta(u,v)<\gamma.$
\end{lemma}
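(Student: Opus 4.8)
The plan is to prove the sharper statement that the ratio $(1-\langle z,v\rangle)/(1-\langle z,u\rangle)$ has modulus bounded above and below by positive constants depending only on $\gamma$; since $\beta(u,v)=\beta(v,u)$, the roles of $u$ and $v$ are interchangeable, so this two-sided bound is exactly the claimed equivalence $|1-\langle z,u\rangle|\approx|1-\langle z,v\rangle|$. The engine of the argument is the standard M\"obius identity for the involutions $\varphi_a$,
\[
1-\langle \varphi_a(z),\varphi_a(w)\rangle=\frac{(1-|a|^2)(1-\langle z,w\rangle)}{(1-\langle z,a\rangle)(1-\langle a,w\rangle)},
\]
valid for $a,w\in\mathbb{B}_n$ and $z\in\bar{\mathbb{B}}_n$. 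Specializing to $a=u$ and the pair of points $z$ and $v$, and then solving for the ratio, I would rewrite
\[
\frac{1-\langle z,v\rangle}{1-\langle z,u\rangle}=\frac{\big(1-\langle \varphi_u(z),\varphi_u(v)\rangle\big)\,(1-\langle u,v\rangle)}{1-|u|^2}.
\]

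It then remains to bound the modulus of the right-hand side from both sides. First, the hypothesis $\beta(u,v)<\gamma$ means $v\in D(u,\gamma)$, so Lemma \ref{le:Estimation2} gives $|1-\langle u,v\rangle|\approx 1-|u|^2$; hence the quotient $(1-\langle u,v\rangle)/(1-|u|^2)$ has modulus comparable to $1$ with constants depending only on $\gamma$. Second, writing $\zeta=\varphi_u(z)$ and $w=\varphi_u(v)$, the formula for the Bergman metric turns $\beta(u,v)<\gamma$ into $|w|=|\varphi_u(v)|<\tanh\gamma=:r<1$, while $|\zeta|\le 1$ because $\varphi_u$ maps $\bar{\mathbb{B}}_n$ into itself. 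Consequently
\[
0<1-r\le 1-|\zeta|\,|w|\le|1-\langle \zeta,w\rangle|\le 1+|\zeta|\,|w|\le 1+r,
\]
so $|1-\langle \varphi_u(z),\varphi_u(v)\rangle|$ is trapped between two positive $\gamma$-dependent constants. Multiplying the two estimates yields the desired two-sided bound on the ratio.

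The only point I expect to require care is the boundary case $z\in\mathbb{S}_n$. The M\"obius identity is an algebraic consequence of the rational formula defining $\varphi_u$, which extends holomorphically across $\mathbb{S}_n$ and carries $\bar{\mathbb{B}}_n$ onto itself, so the identity persists for $z\in\bar{\mathbb{B}}_n$; moreover no denominator can vanish there, since $|\langle z,u\rangle|\le|u|<1$ forces $1-\langle z,u\rangle\neq 0$. The substantive content of the proof is the single observation that the automorphism identity converts the Bergman-metric smallness of $\beta(u,v)$ into the clean separation $|w|<r<1$, after which everything is bookkeeping; if one prefers to sidestep the boundary discussion entirely, one can first establish the inequality for $z\in\mathbb{B}_n$ and then pass to $z\in\mathbb{S}_n$ by continuity.
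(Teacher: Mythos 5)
Your proof is correct. Note first that the paper itself gives no proof of this lemma: it is quoted directly from Lemma 2.27 of \cite{Z}, so there is no in-paper argument to compare against. Your route --- conjugating by the involution $\varphi_u$ via the identity $1-\langle\varphi_u(z),\varphi_u(v)\rangle=\frac{(1-|u|^2)(1-\langle z,v\rangle)}{(1-\langle z,u\rangle)(1-\langle u,v\rangle)}$, then controlling the factor $|1-\langle u,v\rangle|/(1-|u|^2)$ by Lemma \ref{le:Estimation2} and the factor $|1-\langle\varphi_u(z),\varphi_u(v)\rangle|$ by the elementary bounds $1-r\le|1-\langle\zeta,w\rangle|\le 1+r$ with $r=\tanh\gamma$ --- is sound, and each step checks out, including the passage to $z\in\mathbb{S}_n$, where the rational identity persists by continuity and the denominator $1-\langle z,u\rangle$ cannot vanish because $|\langle z,u\rangle|\le|u|<1$. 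The standard proof in the cited reference is different in flavor: it runs through the quasi-triangle inequality for the nonisotropic quantity $|1-\langle z,w\rangle|^{1/2}$ on $\bar{\mathbb{B}}_n$, combined with $|1-\langle u,v\rangle|\approx 1-|u|^2\le 2|1-\langle z,u\rangle|$, and never touches the automorphism identity. Your version buys an exact multiplicative formula for the ratio, hence explicit constants in terms of $\tanh\gamma$ and the constants of Lemma \ref{le:Estimation2}; the triangle-inequality version avoids any discussion of $\varphi_u$ on the boundary. One cosmetic remark: the two-sided bound on the modulus of the single ratio $(1-\langle z,v\rangle)/(1-\langle z,u\rangle)$ already \emph{is} the asserted equivalence, so the appeal to the symmetry $\beta(u,v)=\beta(v,u)$ at the outset is superfluous.
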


\section{Atomic decomposition}\label{AtomDecomp}

We first recall the following ``complex-variable" atomic decomposition for Bergman spaces due to Coifman and Rochberg \cite{CR} (see also \cite{Z}, Theorem 2.30).

\begin{theorem}\label{th:complexatomicdecomp}
Suppose $p>0, \alpha>-1,$ and $b> n \max \{1, 1/p \} + (\alpha+1)/p.$ Then there exists a sequence $\{a_k\}$ in $\mathbb{B}_n$ such that
$\mathcal{A}^p_{\alpha}$ consists exactly of functions of the form
\beq\label{eq:atomdecomp}
f(z)=\sum^{\infty}_{k=1}c_{k}\frac{(1-|a_k|^2)^{(pb-n-1-\alpha)/p}}{(1-\langle z,a_{k}\rangle)^b},\quad  z\in\mathbb{B}_n,
\eeq
where $\{c_k\}$ belongs to the sequence space $\el^p$ and the series converges in the norm topology of $\mathcal{A}^p_{\alpha}.$ Moreover,
\be
\int_{\mathbb{B}_n}|f(z)|^pdv_{\alpha}(z) \approx  \inf \Big \{ \sum_{k}|c_{k}|^p \Big \},
\ee
where the infimum runs over all the above decompositions.
\end{theorem}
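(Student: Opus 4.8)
The plan is to realize \eqref{eq:atomdecomp} as the boundedness and near-invertibility of a sampling/synthesis pair attached to a sufficiently fine lattice. Set $t := b-(n+1)$; since $\alpha>-1$, the hypothesis $b>n\max\{1,1/p\}+(\alpha+1)/p$ forces $t>-1$ and, via the standard Bergman-space inclusions (H\"older's inequality when $p>1$ and the embedding $\mathcal{A}^p_\alpha\subseteq\mathcal{A}^1_t$ when $0<p\le1$), guarantees $\mathcal{A}^p_\alpha\subseteq\mathcal{A}^1_t$. Hence every $f\in\mathcal{A}^p_\alpha$ is reproduced by
\beq\label{eq:reprod}
f(z)=\int_{\mathbb{B}_n}\frac{f(w)}{(1-\la z,w\ra)^{n+1+t}}\,dv_t(w),\qquad z\in\mathbb{B}_n .
\eeq
Using the covering construction one fixes a $\gamma$-lattice $\{a_k\}$ in the Bergman metric together with a Borel partition $\{D_k\}$ of $\mathbb{B}_n$ with $a_k\in D_k\subseteq D(a_k,\gamma)$ and uniformly bounded overlap; the radius $\gamma$ is kept free and taken small at the end. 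Two estimates must be proved: the synthesis bound $\|\sum_k c_k A_k\|_{p,\alpha}^p\lesssim\sum_k|c_k|^p$ for the atoms $A_k(z)=(1-|a_k|^2)^{(pb-n-1-\alpha)/p}(1-\la z,a_k\ra)^{-b}$, and the analysis bound producing coefficients with $\sum_k|c_k|^p\lesssim\|f\|_{p,\alpha}^p$.

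For the synthesis bound I treat the two ranges of $p$ separately. When $0<p\le1$ the subadditivity of $s\mapsto s^p$ gives $|\sum_k c_kA_k(z)|^p\le\sum_k|c_k|^p|A_k(z)|^p$, and integrating against $dv_\alpha$ reduces everything to the elementary estimate
\beqn
\int_{\mathbb{B}_n}\frac{(1-|z|^2)^\alpha}{|1-\la z,a_k\ra|^{pb}}\,dv(z)\approx(1-|a_k|^2)^{-(pb-n-1-\alpha)}
\eeqn
(cf. \cite{Z}), which is legitimate exactly because the hypothesis on $b$ makes $pb-n-1-\alpha>0$; the powers of $1-|a_k|^2$ then cancel and leave $\sum_k|c_k|^p$. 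When $p>1$ one applies H\"older's inequality to $\sum_k|c_k|\,|A_k(z)|$ and runs the same kernel integral as a Schur test, the hypothesis again supplying the positive exponent needed for convergence. In both cases $T\{c_k\}:=\sum_k c_kA_k$ is a bounded map $\el^p\to\mathcal{A}^p_\alpha$, and boundedness also yields norm-convergence of the series.

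For the analysis bound I discretize \eqref{eq:reprod}. Define the sampling operator
\beqn
Vf(z):=\sum_k v_t(D_k)\,\frac{f(a_k)}{(1-\la z,a_k\ra)^{n+1+t}},
\eeqn
which is \eqref{eq:reprod} with the kernel and $f$ frozen at the centers $a_k$. By Lemma \ref{le:Estimation1} one has $v_t(D_k)\approx(1-|a_k|^2)^{n+1+t}=(1-|a_k|^2)^b$, so each term of $Vf$ is a constant multiple of an atom $A_k$ with coefficient $c_k=v_t(D_k)f(a_k)(1-|a_k|^2)^{-(pb-n-1-\alpha)/p}$, whence $|c_k|\approx|f(a_k)|(1-|a_k|^2)^{(n+1+\alpha)/p}$. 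The sub-mean-value inequality $|f(a_k)|^p\lesssim v_\alpha(D(a_k,\gamma))^{-1}\int_{D_k}|f|^p\,dv_\alpha$ for holomorphic $f$, together with $v_\alpha(D_k)\approx(1-|a_k|^2)^{n+1+\alpha}$ (Lemma \ref{le:Estimation1}) and the disjointness of the $D_k$, gives $\sum_k|c_k|^p\lesssim\|f\|_{p,\alpha}^p$. Thus $V$ maps $\mathcal{A}^p_\alpha$ into the range of $T$ with controlled coefficient norm.

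The main obstacle is that $V$ only approximates the identity, so one must show $\|I-V\|_{\mathcal{A}^p_\alpha\to\mathcal{A}^p_\alpha}$ is small for small $\gamma$. Subtracting \eqref{eq:reprod} from $Vf$ gives
\beqn
(I-V)f(z)=\sum_k\int_{D_k}\Big[\frac{f(w)}{(1-\la z,w\ra)^{n+1+t}}-\frac{f(a_k)}{(1-\la z,a_k\ra)^{n+1+t}}\Big]\,dv_t(w),
\eeqn
and the decisive point is an oscillation estimate: on each Bergman ball $D(a_k,\gamma)$ the variation of the integrand is controlled, with a factor proportional to $\gamma$, by the invariant gradient of $w\mapsto(1-\la z,w\ra)^{-(n+1+t)}f(w)$, while Lemmas \ref{le:Estimation2} and \ref{le:Estimation4} make the comparisons $1-|w|^2\approx1-|a_k|^2$ and $|1-\la z,w\ra|\approx|1-\la z,a_k\ra|$ uniform over $w\in D_k$. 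Feeding these bounds into the same integral estimates used for synthesis yields $\|(I-V)f\|_{p,\alpha}\lesssim\gamma\,\|f\|_{p,\alpha}$. Fixing $\gamma$ small enough that this constant is less than $1$, the Neumann series makes $V$ invertible on $\mathcal{A}^p_\alpha$; applying the analysis bound to $g:=V^{-1}f$ and writing $f=Vg=\sum_k c_kA_k$ with $c_k=v_t(D_k)g(a_k)(1-|a_k|^2)^{-(pb-n-1-\alpha)/p}$ produces the desired decomposition with $\sum_k|c_k|^p\lesssim\|V^{-1}f\|_{p,\alpha}^p\lesssim\|f\|_{p,\alpha}^p$. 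Combined with the synthesis bound this gives the equivalence $\int_{\mathbb{B}_n}|f|^p\,dv_\alpha\approx\inf\sum_k|c_k|^p$, completing the proof.
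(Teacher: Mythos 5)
The paper states this theorem without proof, simply attributing it to Coifman--Rochberg \cite{CR} and pointing to Theorem 2.30 of \cite{Z}, and your argument is a correct reconstruction of exactly that standard proof: the lattice/sampling discretization of the reproducing formula $f(z)=\int_{\mathbb{B}_n}(1-\langle z,w\rangle)^{-(n+1+t)}f(w)\,dv_t(w)$ with $t=b-(n+1)$, the synthesis bound via subadditivity (for $p\le 1$) or H\"older plus a Schur-type kernel estimate (for $p>1$), the coefficient bound via the sub-mean-value property, and the Neumann-series inversion of the sampling operator for a sufficiently fine lattice. Your verification that the hypothesis on $b$ delivers both $t>-1$ together with the embedding $\mathcal{A}^p_\alpha\subseteq\mathcal{A}^1_t$ and the positivity of the exponent $pb-n-1-\alpha$ is exactly where that hypothesis is used in the cited proof, so no further comment is needed.
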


By Theorem \ref{th:complexatomicdecomp} we conclude that for any $\alpha >-1,$ $\mathcal{A}^p_{\alpha}$ as a Banach space is isomorphic to $\el^p$ for every $1 \le p < \8.$

Now we turn to the real-variable atomic decomposition of Bergman spaces. To this end, we need some more notations as follows.

For any $\zeta\in{\mathbb{S}_n}$ and $r>0,$ the set
$$Q_r(\zeta)=\{z\in \mathbb{B}_n: d(z,\zeta)<r\}$$
is called a Carleson tube with respect to the nonisotropic metric $d.$ We usually write $Q = Q_r(\zeta)$ in short.

As usual, we define the atoms with respect to the Carleson tube as follows: for $1 <q<\infty,$ $a \in L^q (\mathbb{B}_n, d v_{\alpha})$ is said to be a $(1, q)_{\alpha}$-atom if there is a Carleson tube $Q$ such that
\begin{enumerate}[{\rm (1)}]

\item $a$ is supported in $Q;$

\item $\| a \|_{L^q (\mathbb{B}_n, d v_{\alpha})} \le v_{\alpha} (Q)^{\frac{1}{q}-1};$

\item $\int_{\mathbb{B}_n}a(z)\,dv_{\alpha}(z) = 0.$

\end{enumerate}
The constant function $1$ is also considered to be a $(1, q)_{\alpha}$-atom.

Note that for any $(1, q)_{\alpha}$-atom $a,$
\be
\| a \|_{1, \alpha} = \int_Q | a | d v_{\alpha} \le v_{\alpha} (Q)^{1 - 1/q} \| a \|_{q, \alpha} \le 1.
\ee
Then, we define $\mathcal{A}^{1,q}_{\alpha}$ as the space of all $f \in \mathcal{A}^1_{\alpha}$ which admits a decomposition
\be
f=\sum_i \lambda_i P_{\alpha} a_i\quad \text{and}\quad \sum_i |\lambda_i | \le C_q \|f\|_{1,\,\alpha},
\ee
where for each $i,$ $a_i$ is an $(1,q)_{\alpha}$-atom and $\lambda_i \in \mathbb{C}$ so that $\sum_i | \lambda_i | < \8.$ We equip this space with the norm
\be
\|f\|_{\mathcal{A}^{1,q}_{\alpha}} = \inf \Big \{ \sum_i |\lambda_i |:\; f=\sum_i \lambda_i P_{\alpha} a_i \Big \}
\ee
where the infimum is taken over all decompositions of $f$ described above.

It is easy to see that  $\mathcal{A}^{1,q}_{\alpha}$ is a Banach space.


\begin{theorem}\label{th:AtDecomp}
Let $1< q<\infty$ and $\alpha>-1.$ For every $f \in{\mathcal A^1_{\alpha}}$ there exist a sequence $\{a_i\}$ of $(1, q)_{\alpha}$-atoms and a sequence $\{\lambda_i\}$ of complex numbers such that
\beq \label{eq:AtDecomp}
f=\sum_i \lambda_i P_{\alpha} a_i\quad \text{and}\quad \sum_i |\lambda_i | \le C_q \|f\|_{1,\,\alpha}.
\eeq
Moreover,
\be
\|f\|_{1,\,\alpha} \approx \inf \sum_i |\lambda_i |
\ee
where the infimum is taken over all decompositions of $f$ described above and $`` \approx "$ depends only on $\alpha$ and $q.$
\end{theorem}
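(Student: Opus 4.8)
The plan is to prove the stronger statement that the atomic space $\mathcal{A}^{1,q}_{\alpha}$ coincides with $\mathcal{A}^1_{\alpha}$ with equivalent norms, working throughout in the space of homogeneous type $(\mathbb{B}_n, d, v_{\alpha})$, in which the Carleson tubes $Q_r(\zeta)$ play the role of metric balls and $v_{\alpha}(Q_r(\zeta)) \approx r^{2(n+1+\alpha)}$. The inequality $\|f\|_{1,\alpha} \lesssim \inf_{}\sum_i |\lambda_i|$ is the routine half: by the triangle inequality it reduces to the uniform estimate $\|P_{\alpha} a\|_{1,\alpha} \lesssim 1$ for every $(1,q)_{\alpha}$-atom $a$. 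First I would observe that the Bergman kernel $K_{\alpha}(z,w) = (1-\langle z,w\rangle)^{-(n+1+\alpha)}$ is a Calder\'on--Zygmund kernel relative to $(d, v_{\alpha})$: the size bound $|K_{\alpha}(z,w)| \approx 1/v_{\alpha}(Q_{d(z,w)}(\cdot))$ follows from $d(z,w) = |1-\langle z,w\rangle|^{1/2}$, while the H\"ormander smoothness condition follows from a gradient estimate on $K_{\alpha}$ together with Lemmas \ref{le:Estimation2} and \ref{le:Estimation4}. Given an atom $a$ supported in $Q$, I would split $\int_{\mathbb{B}_n} |P_{\alpha} a|\, dv_{\alpha}$ over a fixed dilate $2Q$ and its complement: on $2Q$ one uses H\"older's inequality, the normalization $\|a\|_{q,\alpha} \le v_{\alpha}(Q)^{1/q-1}$, and the $L^q_{\alpha}$-boundedness of $P_{\alpha}$; on $(2Q)^c$ one inserts the cancellation $\int a\, dv_{\alpha}=0$ to replace $K_{\alpha}(z,w)$ by $K_{\alpha}(z,w)-K_{\alpha}(z,w_0)$ and integrates the H\"ormander bound. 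This is exactly the classical argument that a singular integral carries atoms into $L^1$ uniformly, and it is needed precisely because $P_{\alpha}$ itself is unbounded on $L^1_{\alpha}$.

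The hard half is the existence of a decomposition with $\sum_i |\lambda_i| \lesssim \|f\|_{1,\alpha}$, equivalently the continuous inclusion $\mathcal{A}^1_{\alpha} \hookrightarrow \mathcal{A}^{1,q}_{\alpha}$, which I would establish by duality. The first half already gives a continuous inclusion $\mathcal{A}^{1,q}_{\alpha} \hookrightarrow \mathcal{A}^1_{\alpha}$, so it suffices to show that the induced map on dual spaces is an isomorphism; a standard closed-range argument (an adjoint that is injective and bounded below forces the original inclusion to have dense range and to be bounded below, hence onto) then yields $\mathcal{A}^1_{\alpha} = \mathcal{A}^{1,q}_{\alpha}$ with equivalent norms. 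By the stated identification $(\mathcal{A}^1_{\alpha})^* = \mathcal{B}$ under the pairing $\langle\cdot,\cdot\rangle_{\alpha}$, the remaining task is to compute $(\mathcal{A}^{1,q}_{\alpha})^*$. For holomorphic $g$, the self-adjointness of $P_{\alpha}$ and the reproducing identity $P_{\alpha} g = g$ give $\langle P_{\alpha} a, g\rangle_{\alpha} = \int_{\mathbb{B}_n} a\,\overline{g}\, dv_{\alpha}$, and the cancellation of $a$ permits subtracting the tube average $g_Q = v_{\alpha}(Q)^{-1}\int_Q g\, dv_{\alpha}$, so that with $1/q+1/q'=1$,
\[
\big| \langle P_{\alpha} a, g\rangle_{\alpha} \big| = \Big| \int_Q a\,\overline{(g-g_Q)}\, dv_{\alpha} \Big| \le \|a\|_{q,\alpha}\Big( \int_Q |g-g_Q|^{q'}\, dv_{\alpha} \Big)^{1/q'} \le \Big( \frac{1}{v_{\alpha}(Q)} \int_Q |g-g_Q|^{q'}\, dv_{\alpha} \Big)^{1/q'}.
\]
Taking the supremum over all Carleson tubes identifies the bounded functionals on $\mathcal{A}^{1,q}_{\alpha}$ with holomorphic functions of bounded (nonisotropic) mean oscillation, the value $g(0)$ being controlled separately by testing against the constant atom, since $\langle P_{\alpha}1, g\rangle_{\alpha} = \overline{g(0)}$.

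It therefore remains to identify the holomorphic functions of bounded mean oscillation over Carleson tubes with the Bloch space $\mathcal{B}$, with equivalent norms; feeding this back shows $(\mathcal{A}^{1,q}_{\alpha})^* = \mathcal{B}$ isomorphically to $(\mathcal{A}^1_{\alpha})^* = \mathcal{B}$, which is what the duality argument requires.

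I expect this holomorphic-$\BMO$ versus $\mathcal{B}$ equivalence to be the main obstacle: it is the only step that genuinely uses the complex geometry of the ball rather than the abstract homogeneous-space machinery, and it is here that the sharp comparisons of $|1-\langle z,w\rangle|$, $1-|z|^2$, and the Bergman metric from Lemmas \ref{le:Estimation2}--\ref{le:Estimation4} are decisive. A secondary point to verify, so that the closed-range argument applies, is that the inclusion has dense range, equivalently that no nonzero Bloch function annihilates every $P_{\alpha} a$; this is straightforward, since vanishing mean oscillation on all tubes forces $g$ to be constant and the unit atom then pins that constant to zero.
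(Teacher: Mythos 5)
Your proposal follows essentially the same route as the paper: Theorem \ref{th:AtDecomp} is stated here without proof and is established in \cite{CO1} precisely by duality, i.e.\ by showing that the dual of the atomic space $\mathcal{A}^{1,q}_{\alpha}$, like that of $\mathcal{A}^1_{\alpha}$, is the Bloch space under the pairing $\langle\cdot,\cdot\rangle_{\alpha}$, with the identification of holomorphic functions of bounded mean oscillation over Carleson tubes with $\mathcal{B}$ as the technical core. Your outline --- the uniform bound $\|P_{\alpha}a\|_{1,\alpha}\lesssim 1$ via the Calder\'on--Zygmund estimates of Proposition \ref{prop:CKestimate}, the reduction of surjectivity of the inclusion to the adjoint being bounded below, and the isolation of the Bloch--$\mathrm{BMO}$ equivalence as the decisive lemma --- is sound, though that last lemma (and the Riesz-representation step identifying an arbitrary functional on $\mathcal{A}^{1,q}_{\alpha}$ with a holomorphic $\mathrm{BMO}$ function) is exactly the part you leave unproved.
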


Theorem \ref{th:AtDecomp} is proved in \cite{CO1} via duality.

\begin{remark}\label{rk:atomdecomp-p<1}
One would like to expect that when $0 < p < 1,$ $\A^p_{\alpha}$ also admits an atomic decomposition in terms of atoms with respect to Carleson tubes. However, the proof of Theorem \ref{th:AtDecomp} via duality cannot be extended to the case $0< p<1.$ At the time of this writing, this problem is entirely open.

As mentioned in Introduction, by their theory of harmonic analysis on homogeneous spaces, Coifman and Weiss \cite{CW} have obtained a real-variable atomic decomposition in terms of $\varrho$ for Bergman spaces in the case $0 < p \le 1.$
\end{remark}

\section{Real-variable characterizations}\label{RVC}

\subsection{Maximal functions}\label{maximalfunct}

As is well known, maximal functions play a crucial role in the real-variable theory of Hardy spaces (cf. \cite{Stein1993}). In \cite{CO1}, the authors established a maximal-function characterization for the Bergman spaces. To this end, we define for each $\g > 0$ and $f \in \mathcal{H} (\mathbb{B}_n):$
\beq\label{eq:MaximalFunct}
(\mathrm{M}_{\g} f) (z) = \sup_{w \in D(z, \g)} | f (w)|,\; \forall z \in \mathbb{B}_n.
\eeq
The following result is proved in \cite{CO1}.

\begin{theorem}\label{th:MaxialCharat}
Suppose $\gamma >0$ and $\alpha > -1.$ Let $0< p < \8.$ Then for any $f \in \mathcal{H} (\mathbb{B}_n),$ $f \in \mathcal{A}^p_{\alpha}$ if and only if $\mathrm{M}_{\g} f \in L^p (\mathbb{B}_n, d v_{\alpha}).$ Moreover,
\beq\label{eq:MaximalFunctNorm}
\| f \|_{p,\alpha} \approx \| \mathrm{M}_{\g} f \|_{p, \alpha},
\eeq
where ``$\approx$" depends only on $\gamma, \alpha, p,$ and $n.$
\end{theorem}

The norm appearing on the right-hand side of \eqref{eq:MaximalFunctNorm} can be viewed an analogue of the so-called nontangential maximal function in Hardy spaces. The proof of Theorem \ref{th:MaxialCharat} is fairly elementary, using some basic facts and estimates on the Bergman balls.

\begin{corollary}\label{cor:MaxialCharat}
Suppose $\gamma >0$ and $\alpha \in \mathbb{R}.$ Let $0< p < \8$ and $k$ be a nonnegative integer such that $pk + \alpha > -1.$ Then for any $f \in \mathcal{H} (\mathbb{B}_n),$ $f \in \mathcal{A}^p_{\alpha}$ if and only if $\mathrm{M}_{\g} ( \mathcal{R}^k f ) \in L^p (\mathbb{B}_n, d v_{\alpha}),$
where
\beq\label{eq:kMaximalFunct}
\mathrm{M}_{\g} ( \mathcal{R}^k f ) (z) = \sup_{w \in D (z, \gamma)} | (1-|w|^2)^k \mathcal{R}^k f (w) |,\quad z \in \mathbb{B}_n.
\eeq
Moreover,
\beq\label{eq:kMaximalFunctNorm}
\| f \|_{p,\alpha} \approx | f(0)| + \| \mathrm{M}_{\g} ( \mathcal{R}^k f ) \|_{p, \alpha},
\eeq
where ``$\approx$" depends only on $\gamma, \alpha, p, k,$ and $n.$
\end{corollary}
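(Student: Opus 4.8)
The plan is to reduce the statement to Theorem~\ref{th:MaxialCharat} by passing from $f$ to the holomorphic function $\mathcal{R}^k f$ and absorbing the factor $(1-|z|^2)^k$ into a shift of the weight. Observe first that $\mathcal{R}^k f \in \mathcal{H}(\mathbb{B}_n)$ whenever $f \in \mathcal{H}(\mathbb{B}_n)$, and that by the very definition of the generalized Bergman space, $f \in \mathcal{A}^p_\alpha$ if and only if $(1-|z|^2)^k \mathcal{R}^k f \in L^p(\mathbb{B}_n, dv_\alpha)$. Writing out the measure, this condition reads $\int_{\mathbb{B}_n} |\mathcal{R}^k f(z)|^p (1-|z|^2)^{pk+\alpha}\, dv(z) < \infty$, i.e.\ $\mathcal{R}^k f \in L^p(\mathbb{B}_n, dv_{pk+\alpha})$. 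Since $pk+\alpha > -1$ by hypothesis, this is precisely the statement that $\mathcal{R}^k f$ belongs to the classical Bergman space $\mathcal{A}^p_{pk+\alpha}$, with comparable norms.

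Next I would apply Theorem~\ref{th:MaxialCharat}, which is available because the shifted weight $pk+\alpha$ exceeds $-1$, to the holomorphic function $h := \mathcal{R}^k f$. This gives $h \in \mathcal{A}^p_{pk+\alpha}$ if and only if its unweighted maximal function $\mathrm{M}_\gamma h(z) = \sup_{w \in D(z,\gamma)} |h(w)|$ lies in $L^p(\mathbb{B}_n, dv_{pk+\alpha})$, together with the norm equivalence $\|h\|_{p,\,pk+\alpha} \approx \|\mathrm{M}_\gamma h\|_{p,\,pk+\alpha}$.

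It then remains to match $\mathrm{M}_\gamma h$ measured in $dv_{pk+\alpha}$ against the weighted maximal function $\mathrm{M}_\gamma(\mathcal{R}^k f)$ of \eqref{eq:kMaximalFunct} measured in $dv_\alpha$. The bridge is Lemma~\ref{le:Estimation2}: for $w \in D(z,\gamma)$ one has $1-|w|^2 \approx 1-|z|^2$ with constants depending only on $\gamma$, so raising to the $k$-th power and taking the supremum over $w \in D(z,\gamma)$ yields $\mathrm{M}_\gamma(\mathcal{R}^k f)(z) \approx (1-|z|^2)^k \mathrm{M}_\gamma h(z)$. Integrating the $p$-th power against $dv_\alpha$ and pulling the factor $(1-|z|^2)^{pk}$ into the weight then gives $\|\mathrm{M}_\gamma(\mathcal{R}^k f)\|_{p,\,\alpha} \approx \|\mathrm{M}_\gamma h\|_{p,\,pk+\alpha}$. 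Chaining the three equivalences, namely the definitional reduction, Theorem~\ref{th:MaxialCharat}, and this reweighting, proves both the ``if and only if'' and the norm comparison, the term $|f(0)|$ on the right-hand side of \eqref{eq:kMaximalFunctNorm} being exactly the constant term carried by the generalized Bergman norm \eqref{eq:BergmanSpaceNorm}.

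The argument is essentially bookkeeping once Theorem~\ref{th:MaxialCharat} is granted, so I do not expect a serious analytic obstacle; the only points requiring care are the interchange of the factor $(1-|w|^2)^k$ with the supremum, which is handled by Lemma~\ref{le:Estimation2}, and the fact that \eqref{eq:BergmanSpaceNorm} is stated with the minimal exponent $N$ rather than the given $k$. The latter is covered by the observation that $\mathcal{A}^p_\alpha$ is independent of the choice of $k$, which supplies the equivalence $\|f\|_{p,\,\alpha} \approx |f(0)| + \|(1-|\cdot|^2)^k \mathcal{R}^k f\|_{p,\,\alpha}$ needed to align the two normalizations.
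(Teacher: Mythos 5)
Your proposal is correct and follows exactly the route the paper indicates: reduce to Theorem~\ref{th:MaxialCharat} via the identification of $f\in\mathcal{A}^p_{\alpha}$ with $\mathcal{R}^k f\in L^p(\mathbb{B}_n, dv_{pk+\alpha})$, and use Lemma~\ref{le:Estimation2} to move the factor $(1-|w|^2)^k$ past the supremum and into the weight. The bookkeeping details you supply (uniform comparability of $1-|w|^2$ and $1-|z|^2$ on $D(z,\gamma)$, and the independence of the generalized Bergman norm from the choice of $k$) are exactly the points the paper leaves implicit.
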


To prove Corollary \ref{cor:MaxialCharat}, one merely notices that $f \in \mathcal{A}^p_{\alpha}$ if and only if $\mathcal{R}^k f \in L^p (\mathbb{B}_n, d v_{\alpha + pk})$ and applies Theorem \ref{th:MaxialCharat} to $\mathcal{R}^k f$ with the help of Lemma \ref{le:Estimation2}.

\subsection{Area integral functions}\label{areafunct}

In order to state the real-variable area integral characterizations of the Bergman spaces, we require some more notation. For any $f \in \mathcal{H} ( \mathbb{B}_n)$ and $z = (z_1, \ldots, z_n) \in \mathbb{B}_n$ we define
\be
\mathcal{R} f (z) = \sum^n_{k=1} z_k \frac{\partial f (z)}{\partial z_k}
\ee
and call it the radial derivative of $f$ at $z.$ The complex and invariant gradients of $f$ at $z$ are respectively defined as
\be
\nabla f(z) = \Big ( \frac{\partial f (z) }{\partial z_1},\ldots, \frac{\partial f (z) }{\partial z_n} \Big )\; \text{and}\; \widetilde{\nabla} f (z) = \nabla(f \circ \varphi_z)(0).
\ee

Now, for fixed $\gamma >0$ and $1 < q < \8,$ we define for each $f \in \mathcal{H} (\mathbb{B}_n)$ and $z \in \mathbb{B}_n:$
\begin{enumerate}[{\rm (1)}]

\item The radial area function
\be
A_{\gamma}^{(q)} (\mathcal{R} f) (z) = \left ( \int_{D(z,\gamma)}  | (1-|w|^2) \mathcal{R} f (w) |^q d \tau (w) \right )^{\frac{1}{q}}.
\ee

\item The complex gradient area function
\be
A_{\gamma}^{(q)} (\nabla f) (z) = \left ( \int_{D(z, \gamma)} | (1-|w|^2) \nabla f (w) |^q d \tau (w) \right )^{\frac{1}{q}}.
\ee

\item The invariant gradient area function
\be
A_{\gamma}^{(q)} (\tilde{\nabla} f) (z) = \left ( \int_{D(z, \gamma)} | \tilde{\nabla} f (w) |^q d \tau (w) \right )^{\frac{1}{q}}.
\ee

\end{enumerate}

The following theorem is proved in \cite{CO1}.

\begin{theorem}\label{th:AreaCharat}
Suppose $\gamma > 0, 1 < q < \8,$ and $\alpha > -1.$ Let $0 < p < \8.$ Then, for any $f \in \mathcal{H} (\mathbb{B}_n)$ the following conditions are equivalent:
\begin{enumerate}[{\rm (a)}]

\item $f \in \mathcal{A}^p_{\alpha}.$

\item $A_{\gamma}^{(q)} (\mathcal{R} f)$ is in $L^p (\mathbb{B}_n, d v_{\alpha}).$

\item $A_{\gamma}^{(q)} (\nabla f)$ is in $L^p (\mathbb{B}_n, d v_{\alpha}).$

\item $A_{\gamma}^{(q)} (\tilde{\nabla} f)$ is in $L^p (\mathbb{B}_n, d v_{\alpha}).$

\end{enumerate}
Moreover, the quantities
\be
|f(0)| + \| A_{\gamma}^{(q)} (\mathcal{R} f) \|_{p, \alpha},\; |f(0)| + \| A_{\gamma}^{(q)} (\nabla f) \|_{p, \alpha}, \; |f(0)| + \| A_{\gamma}^{(q)} (\tilde{\nabla} f) \|_{p, \alpha},
\ee
are all comparable to $\| f \|_{p, \alpha},$ where the comparable constants depend only on $\gamma, q, \alpha, p,$ and $n.$
\end{theorem}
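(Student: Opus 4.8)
The plan is to prove the four conditions equivalent by sandwiching each area function, \emph{pointwise}, between two objects whose $L^p(dv_\alpha)$-behaviour is already under control: the admissible maximal function $\mathrm{M}_\gamma f$ of Theorem \ref{th:MaxialCharat} (for the upper bounds) and the radial area function $A^{(q)}_\gamma(\mathcal R f)$ (for the lower bound). Two structural facts drive everything. First, by Lemma \ref{le:Estimation2} one has $(1-|w|^2)\approx(1-|z|^2)$ for $w\in D(z,\gamma)$, while Lemma \ref{le:Estimation1} with $\alpha=-(n+1)$ gives $\tau(D(z,\gamma))\approx C_\gamma$ uniformly in $z$. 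Second, the three differential quantities are ordered pointwise: from $\mathcal R f=\langle\nabla f,\bar z\rangle$ we get $|\mathcal R f|\le|z|\,|\nabla f|\le|\nabla f|$, and from the identity $|\widetilde{\nabla}f(z)|^2=(1-|z|^2)\big(|\nabla f(z)|^2-|\mathcal R f(z)|^2\big)$ (see \cite{Z}) combined with $|\mathcal R f|\le|z|\,|\nabla f|$ and the elementary inequality $(2-|z|^2)|z|^2\le 1$, one checks $(1-|z|^2)|\mathcal R f(z)|\le|\widetilde{\nabla}f(z)|$. Integrating $q$-th powers over $D(z,\gamma)$ therefore yields
\begin{equation*}
A^{(q)}_\gamma(\mathcal R f)(z)\le A^{(q)}_\gamma(\nabla f)(z)\quad\text{and}\quad A^{(q)}_\gamma(\mathcal R f)(z)\le A^{(q)}_\gamma(\widetilde{\nabla}f)(z),
\end{equation*}
so the radial area function is the smallest of the three.

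Next I would establish the upper bounds $\|A^{(q)}_\gamma(\,\cdot\,)\|_{p,\alpha}\lesssim\|f\|_{p,\alpha}$. For the radial and complex gradients the Bergman-ball Cauchy estimate gives $(1-|w|^2)|\nabla f(w)|\lesssim\mathrm{M}_{\gamma'}f(w)$; since $\mathrm{M}_{\gamma'}f(w)\le\mathrm{M}_{\gamma+\gamma'}f(z)$ for $w\in D(z,\gamma)$ and $\tau(D(z,\gamma))\approx C_\gamma$, it follows that $A^{(q)}_\gamma(\nabla f)(z)\lesssim\mathrm{M}_{\gamma+\gamma'}f(z)$, and a fortiori the same bound holds for $A^{(q)}_\gamma(\mathcal R f)$. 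For the invariant gradient the naive estimate $|\widetilde{\nabla}f|\le(1-|z|^2)^{1/2}|\nabla f|$ is too weak (it loses a half-power of $1-|z|^2$ when $n\ge2$), so instead I would exploit the M\"obius invariance $\widetilde{\nabla}f(w)=\nabla(f\circ\varphi_w)(0)$: a fixed-radius Cauchy estimate for the holomorphic function $f\circ\varphi_w$ at the origin gives $|\widetilde{\nabla}f(w)|\lesssim\sup_{v\in D(w,\gamma_0)}|f(v)|\le\mathrm{M}_{\gamma+\gamma_0}f(z)$. In all three cases one reaches $A^{(q)}_\gamma(\,\cdot\,)(z)\lesssim\mathrm{M}_{\gamma''}f(z)$, whence $\|A^{(q)}_\gamma(\,\cdot\,)\|_{p,\alpha}\lesssim\|\mathrm{M}_{\gamma''}f\|_{p,\alpha}\approx\|f\|_{p,\alpha}$ by Theorem \ref{th:MaxialCharat}; this proves $(a)\Rightarrow(b),(c),(d)$.

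For the converse the whole problem collapses to the single pointwise inequality $(1-|z|^2)|\mathcal R f(z)|\lesssim A^{(q)}_\gamma(\mathcal R f)(z)$. Indeed, using $(1-|w|^2)\approx(1-|z|^2)$ and $d\tau(w)\approx(1-|z|^2)^{-(n+1)}dv(w)$ on $D(z,\gamma)$,
\begin{equation*}
A^{(q)}_\gamma(\mathcal R f)(z)^q\approx(1-|z|^2)^{q-n-1}\int_{D(z,\gamma)}|\mathcal R f(w)|^q\,dv(w),
\end{equation*}
and since $|\mathcal R f|^q$ is plurisubharmonic the sub-mean-value property over Bergman balls gives $\int_{D(z,\gamma)}|\mathcal R f|^q\,dv\gtrsim v(D(z,\gamma))\,|\mathcal R f(z)|^q\approx(1-|z|^2)^{n+1}|\mathcal R f(z)|^q$; multiplying the two displays proves the claim. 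Integrating the $p$-th power against $dv_\alpha$ and invoking the equivalent norm $\|f\|_{p,\alpha}\approx|f(0)|+\|(1-|\cdot|^2)\mathcal R f\|_{p,\alpha}$, valid for $\alpha>-1$ by the $k$-independence of the definition of $\mathcal A^p_\alpha$ recorded in Section \ref{Bergmanspace}, yields $\|f\|_{p,\alpha}\lesssim|f(0)|+\|A^{(q)}_\gamma(\mathcal R f)\|_{p,\alpha}$. Combining this with the pointwise domination $A^{(q)}_\gamma(\mathcal R f)\le\min\{A^{(q)}_\gamma(\nabla f),A^{(q)}_\gamma(\widetilde{\nabla}f)\}$ from the first paragraph gives $(b),(c),(d)\Rightarrow(a)$, and together with the upper bounds establishes that all four quantities are comparable to $\|f\|_{p,\alpha}$.

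The step I expect to demand the most care is the invariant-gradient case $(d)$. Because $|\widetilde{\nabla}f|$ is only comparable to $(1-|z|^2)^{1/2}|\nabla f|$, and not to $(1-|z|^2)|\nabla f|$, once $n\ge2$, the invariant-gradient area function cannot be squeezed against the radial and complex-gradient ones by a crude pointwise estimate; one must use the M\"obius invariance of $\widetilde{\nabla}$ to get the upper bound in the $\varphi_w$-normalized coordinates, while the algebraic identity above is what still places $A^{(q)}_\gamma(\mathcal R f)$ below $A^{(q)}_\gamma(\widetilde{\nabla}f)$ for the lower bound. Everything else reduces to the two Bergman-ball lemmas, the holomorphic Cauchy and sub-mean-value estimates, and the already-proved maximal characterization.
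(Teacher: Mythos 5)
Your argument is correct, and all the ingredients check out: the identity $|\widetilde{\nabla}f(z)|^2=(1-|z|^2)\bigl(|\nabla f(z)|^2-|\mathcal{R}f(z)|^2\bigr)$ together with $|\mathcal{R}f|\le |z|\,|\nabla f|$ and $(2-|z|^2)|z|^2\le 1$ (equivalently $(1-|z|^2)^2\ge 0$) does give $(1-|z|^2)|\mathcal{R}f|\le|\widetilde{\nabla}f|\,$; the Cauchy estimate on Bergman balls, the inclusion $D(w,\gamma')\subset D(z,\gamma+\gamma')$, and the uniform bound $\tau(D(z,\gamma))\approx C_\gamma$ give the domination by $\mathrm{M}_{\gamma''}f$; the sub-mean-value property of $|\mathcal{R}f|^q$ gives the reverse pointwise bound; and you correctly isolate the one genuinely delicate point, namely that the invariant gradient must be handled through the M\"obius normalization $\widetilde{\nabla}f(w)=\nabla(f\circ\varphi_w)(0)$ rather than through the crude comparison with $(1-|z|^2)^{1/2}|\nabla f|$. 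However, your route is quite different from the one this paper takes. The survey states Theorem \ref{th:AreaCharat} as proved in \cite{CO1} and devotes Section \ref{BergmanOperator} to a \emph{new} proof of the area-integral estimates: the operators $f\mapsto f\circ\varphi_z$, $f\mapsto(1-|\varphi_z(\cdot)|^2)\mathcal{R}f(\varphi_z(\cdot))$, etc., are realized as Calder\'on--Zygmund singular integrals with kernels valued in $E=L^q(\mathbb{B}_n,\chi_{D(0,\gamma)}d\tau)$ on the homogeneous space $(\mathbb{B}_n,\varrho,v_\alpha)$, and the bound $\|A^{(q)}_\gamma(\cdot)\|_{p,\alpha}\lesssim\|f\|_{p,\alpha}$ is deduced from $L^q$-boundedness of $P_\alpha$ plus the vector-valued kernel estimates (Proposition \ref{prop:VCZO}, Theorems \ref{th:TentBIO} and \ref{th:AreaBIO}). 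That machinery yields only the forward inequality, only for $1<p<\infty$, but as a bonus gives weak-type $(1,1)$, $L^\infty\to\mathrm{BMO}$ and $\mathrm{H}^1\to L^1$ endpoint results. Your elementary argument, by contrast, covers the full range $0<p<\infty$ and both directions at once (reducing everything to Theorem \ref{th:MaxialCharat} and the norm equivalence $\|f\|_{p,\alpha}\approx|f(0)|+\|(1-|\cdot|^2)\mathcal{R}f\|_{p,\alpha}$), and is essentially the direct approach of \cite{CO1} rather than the operator-theoretic one presented here.
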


In particular, taking the equivalence of (a) and (b), one obtains
\be
\| f \|_{p, \alpha} \approx |f(0)| + \| A_{\gamma}^{(q)} (\mathcal{R} f) \|_{p, \alpha},
\ee
which looks tantalizingly simple. However, the authors know no simple proof of this fact even in the case of the usual Bergman space on the unit disc.

\begin{corollary}\label{cor:AreaCharat}
Suppose $\gamma >0, 1 < q < \8,$ and $\alpha \in \mathbb{R}.$ Let $0 < p < \8$ and $k$ be a nonnegative integer such that $pk + \alpha > -1.$ Then for any $f \in \mathcal{H} (\mathbb{B}_n),$ $f \in \mathcal{A}^p_{\alpha}$ if and only if $A_{\gamma}^{(q)} (\mathcal{R}^{k+1} f)$ is in $L^p (\mathbb{B}_n, d v_{\alpha}),$
where
\beq\label{eq:kareaFunct}
A_{\gamma}^{(q)} (\mathcal{R}^k f) (z) = \left ( \int_{D(z,\gamma)} \big | (1-|w|^2)^k \mathcal{R}^k f (w) \big |^q d \tau (w) \right )^{\frac{1}{q}}.
\eeq
Moreover,
\beq\label{eq:kAreaFunctNorm}
\| f \|_{p,\alpha} \approx | f(0)| + \| A_{\gamma}^{(q)} (\mathcal{R}^{k+1} f) \|_{p, \alpha},
\eeq
where ``$\approx$" depends only on $\gamma, q, \alpha, p, k,$ and $n.$
\end{corollary}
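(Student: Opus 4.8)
The plan is to reduce the statement to Theorem \ref{th:AreaCharat} by differentiating $k$ times and shifting the weight. Set $g = \mathcal{R}^k f$ and $\beta = \alpha + pk$; since $g \in \mathcal{H}(\mathbb{B}_n)$ and the hypothesis $pk + \alpha > -1$ forces $\beta > -1$, Theorem \ref{th:AreaCharat} applies to $g$ with the admissible weight $\beta$. First I would record the elementary fact built into the definition of the generalized Bergman spaces, namely the independence of the defining integer $k$, which gives that $f \in \mathcal{A}^p_\alpha$ if and only if $g = \mathcal{R}^k f \in \mathcal{A}^p_\beta$, together with the norm equivalence $\|f\|_{p,\alpha} \approx |f(0)| + \|g\|_{p,\beta}$. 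Because $\beta > -1$, the space $\mathcal{A}^p_\beta$ is a genuine weighted Bergman space and $\|g\|_{p,\beta} \approx |g(0)| + \left( \int_{\mathbb{B}_n} |g|^p \, dv_\beta \right)^{1/p}$.

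Next I would apply Theorem \ref{th:AreaCharat} to $g$ in place of $f$ and with $\beta$ in place of $\alpha$, using the equivalence of its conditions (a) and (b): this yields $g \in \mathcal{A}^p_\beta$ if and only if $A_\gamma^{(q)}(\mathcal{R} g) \in L^p(\mathbb{B}_n, dv_\beta)$, with $\|g\|_{p,\beta} \approx |g(0)| + \|A_\gamma^{(q)}(\mathcal{R} g)\|_{p,\beta}$. Since $\mathcal{R} g = \mathcal{R}^{k+1} f$, the relevant area function from the theorem is
\[ A_\gamma^{(q)}(\mathcal{R} g)(z) = \Big( \int_{D(z,\gamma)} \big| (1-|w|^2)\,\mathcal{R}^{k+1} f(w) \big|^q \, d\tau(w) \Big)^{1/q}, \]
which carries a single weight factor $(1-|w|^2)$, whereas the area function appearing in the corollary,
\[ A_\gamma^{(q)}(\mathcal{R}^{k+1} f)(z) = \Big( \int_{D(z,\gamma)} \big| (1-|w|^2)^{k+1}\,\mathcal{R}^{k+1} f(w) \big|^q \, d\tau(w) \Big)^{1/q}, \]
carries the factor $(1-|w|^2)^{k+1}$. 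The task is therefore to reconcile these two weightings.

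The key step is precisely this weight manipulation, which is where Lemma \ref{le:Estimation2} does the work. For $w \in D(z,\gamma)$ one has $1-|w|^2 \approx 1-|z|^2$, so $(1-|w|^2)^{k} \approx (1-|z|^2)^{k}$ uniformly on the Bergman ball; pulling this factor out of the inner integral gives the pointwise comparison
\[ A_\gamma^{(q)}(\mathcal{R}^{k+1} f)(z) \approx (1-|z|^2)^k \, A_\gamma^{(q)}(\mathcal{R} g)(z). \]
Raising to the power $p$, integrating against $dv_\alpha$, and using the identity $(1-|z|^2)^{pk}\, dv_\alpha(z) = (c_\alpha/c_\beta)\, dv_\beta(z)$, I obtain $\|A_\gamma^{(q)}(\mathcal{R}^{k+1} f)\|_{p,\alpha} \approx \|A_\gamma^{(q)}(\mathcal{R} g)\|_{p,\beta}$. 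Chaining this with the two preceding norm equivalences produces
\[ \|f\|_{p,\alpha} \approx |f(0)| + |g(0)| + \| A_\gamma^{(q)}(\mathcal{R}^{k+1} f) \|_{p,\alpha}. \]

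Finally I would dispose of the spurious term $|g(0)|$. Since $\mathcal{R}h(0) = 0$ for every $h \in \mathcal{H}(\mathbb{B}_n)$, we have $g(0) = \mathcal{R}^k f(0) = 0$ whenever $k \ge 1$, so this term simply drops out; and for $k = 0$ one has $g = f$, $\beta = \alpha$, and the assertion is literally the equivalence of (a) and (b) in Theorem \ref{th:AreaCharat}. In every case this gives the desired equivalence $\|f\|_{p,\alpha} \approx |f(0)| + \| A_\gamma^{(q)}(\mathcal{R}^{k+1} f) \|_{p,\alpha}$, and the qualitative membership characterization follows from the norm comparison. I do not expect any genuine obstacle here: the only point demanding care is the bookkeeping of exponents in the weight shift and the observation that the normalizing constants $c_\alpha$ and $c_\beta$ are finite and positive (which holds precisely because $\beta = \alpha + pk > -1$), so that the two measures $(1-|z|^2)^{pk}\, dv_\alpha$ and $dv_\beta$ are comparable.
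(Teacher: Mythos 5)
Your proposal is correct and follows essentially the same route as the paper: the paper's (one-line) proof likewise reduces to Theorem \ref{th:AreaCharat} by noting that $f \in \mathcal{A}^p_{\alpha}$ iff $\mathcal{R}^k f \in L^p(\mathbb{B}_n, dv_{\alpha+pk})$ and then using Lemma \ref{le:Estimation2} to trade the factor $(1-|w|^2)^k$ inside the Bergman ball for $(1-|z|^2)^k$ outside. Your additional bookkeeping (the measure identity $(1-|z|^2)^{pk}\,dv_\alpha = (c_\alpha/c_\beta)\,dv_\beta$ and the observation that $\mathcal{R}^k f(0)=0$ for $k\ge 1$) just makes explicit what the paper leaves to the reader.
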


To prove Corollary \ref{cor:AreaCharat}, one merely notices that $f \in \mathcal{A}^p_{\alpha}$ if and only if $\mathcal{R}^k f \in L^p (\mathbb{B}_n, d v_{\alpha + pk})$ and applies Theorem \ref{th:AreaCharat} to $\mathcal{R}^k f$ with the help of Lemma \ref{le:Estimation2}.






\subsection{Tent spaces}\label{tentspace}

The basic functional used below is the one mapping functions in $\mathbb{B}_n$ to functions in $\mathbb{B}_n,$ given by
\beq\label{eq:TentFunct1}
A^{(q)}_{\gamma}(f)(z) = \left(\int_{D(z, \g)} | f (w)|^qd\tau(w)\right)^{\frac{1}{q}}
\eeq
if $1 < q < \infty,$ and
\beq\label{eq:TentFunct2}
A^{(\infty)}_{\gamma} (f) (z) = \sup_{w \in D(z, \g)} | f (w)|,\quad \mathrm{when} \ q=\infty.
\eeq
Then, the ``holomorphic space of tent type" $T^p_{q,\alpha}$ in $\mathbb{B}_n$ is defined as the
holomorphic functions $f$ in $\mathbb{B}_n$ so that $A^{(q)}_{\gamma}(f) \in L^p_{\alpha},$ when $0<p \le\infty$ and
$\alpha>-1,\; \gamma > 0, 1 < q \le \8.$ The corresponding classes are then equipped with a norm (or, quasi-norm) $\|f \|_{T^p_{q,\alpha}} = \|A^{(q)}_{\g}(f)\|_{p,\alpha}.$ This motivation aries form the tent spaces in $\mathbb{R}^n,$ which were introduced and developed by Coifman, Meyer and Stein in \cite{CMS1985}.

The case of $q=\infty$ and $0<p<\infty$ was studied in Section \ref{maximalfunct} (see \cite{CO1} for details). Actually, the resulting tent type spaces $T^{p}_{\infty,\alpha}$ is Bergman spaces $\mathcal{A}^p_{\alpha}.$ It is clear that $T^{\infty}_{q,\alpha}$ with $1<q<\infty$ is imbedded in Bloch space. On the other hand, $T^p_{q,\alpha}$ are Banach spaces when $p\ge 1.$

It is well known that the Hardy-Littlewood maximal function operator has played important role in harmonic analysis. To cater our estimates, we use two variants of the non-central Hardy-Littlewood maximal function operator acting on the weighted Lebesgue spaces $L^p_\alpha({\mathbb{B}_n})$, namely,
\beq\label{eq:MaxFunct2}
M^{(q)}_{\g} (f) (z) = \sup_{z \in D(w, \g)}\left(\frac{1}{v_{\alpha}(D(w,\g))}\int_{D(w,\g)}|f|^qdv_{\alpha}\right)^{\frac{1}{q}}
\eeq
for $0 < q < \8.$ We simply write $M_{\g} (f) (z): = M^{(1)}_{\g} (f) (z).$

The following result is proved in \cite{CO2}.

\begin{theorem}\label{th:tentspace}
Suppose $\gamma >0, 1 < q < \8,$ and $\alpha > -1.$ Let $0< p <\8.$ Then for any $f \in \mathcal{H} (\mathbb{B}_n),$ the following conditions are equivalent:
\begin{enumerate}[{\rm (1)}]

\item $f \in \A^p_{\alpha}.$

\item $A^{(q)}_{\g} (f)$ is in $L^p(\mathbb{B}_n, d v_{\alpha}).$

\item $M^{(q)}_{\g} (f)$ is in $L^p(\mathbb{B}_n, d v_{\alpha}).$

\end{enumerate}
Moreover,
\be
\| f \|_{\A^p_{\alpha}} \approx \| f \|_{T^p_{q, \alpha}} \approx \| M^{(q)}_{\g} (f) \|_{p, \alpha},
\ee
where the comparable constants depend only on $\gamma, q, \alpha, p,$ and $n.$
\end{theorem}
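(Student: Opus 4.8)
The plan is to prove the cyclic chain
\[
\|f\|_{p,\alpha}\ \lesssim\ \big\|A^{(q)}_{\gamma}(f)\big\|_{p,\alpha}\ \lesssim\ \big\|M^{(q)}_{\gamma}(f)\big\|_{p,\alpha}\ \lesssim\ \|f\|_{p,\alpha},
\]
which at once gives the equivalence of (1)--(3) and all the asserted comparabilities. Two ingredients will be used throughout: the sup-maximal characterization already in hand, namely $\|\mathrm{M}_{\gamma}f\|_{p,\alpha}\approx\|f\|_{p,\alpha}$ from Theorem \ref{th:MaxialCharat} (valid for every aperture), and the sub-mean value inequality for holomorphic $f$: for each $\delta>0$ there is $C$ with $|f(z)|^q\le C\,\tau(D(z,\delta))^{-1}\int_{D(z,\delta)}|f|^q\,d\tau$, obtained by applying the Euclidean sub-mean value property to the subharmonic function $|f\circ\varphi_z|^q$ and using that $\tau$ is M\"obius invariant, so that $\tau(D(z,\delta))=\tau(D(0,\delta))$ is a constant. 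Since $\alpha>-1$ the defining integer in \eqref{eq:BergmanSpaceNorm} is $0$ and $|f(0)|\lesssim(\int_{\mathbb{B}_n}|f|^p\,dv_\alpha)^{1/p}$, so $\|f\|_{p,\alpha}\approx(\int_{\mathbb{B}_n}|f|^p\,dv_\alpha)^{1/p}$.

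I would first dispose of the two easy links. For $\|M^{(q)}_{\gamma}(f)\|_{p,\alpha}\lesssim\|f\|_{p,\alpha}$ note the pointwise domination $M^{(q)}_{\gamma}(f)(z)\le\mathrm{M}_{2\gamma}f(z)$: if $z\in D(w,\gamma)$ then $D(w,\gamma)\subset D(z,2\gamma)$, so the $v_\alpha$-average of $|f|^q$ over $D(w,\gamma)$ is at most $(\mathrm{M}_{2\gamma}f(z))^q$; the claim then follows from Theorem \ref{th:MaxialCharat} with aperture $2\gamma$. For $\|A^{(q)}_{\gamma}(f)\|_{p,\alpha}\lesssim\|M^{(q)}_{\gamma}(f)\|_{p,\alpha}$, Lemmas \ref{le:Estimation1} and \ref{le:Estimation2} let me replace $d\tau$ by $(1-|z|^2)^{-(n+1)-\alpha}\,dv_\alpha$ on $D(z,\gamma)$, giving $A^{(q)}_{\gamma}(f)(z)^q\approx v_\alpha(D(z,\gamma))^{-1}\int_{D(z,\gamma)}|f|^q\,dv_\alpha\le M^{(q)}_{\gamma}(f)(z)^q$ (take $w=z$ in the supremum defining $M^{(q)}_{\gamma}$).

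The substantive link is $\|f\|_{p,\alpha}\lesssim\|A^{(q)}_{\gamma}(f)\|_{p,\alpha}$, which I would obtain by discretization, keeping the aperture fixed so that no separate aperture-independence lemma is needed. Fix a Bergman-metric lattice $\{a_k\}$ with $\bigcup_k D(a_k,\gamma/8)=\mathbb{B}_n$ and with $\{D(a_k,\gamma/4)\}$ of bounded overlap (multiplicity $\le\kappa$), and set $I_k=\int_{D(a_k,\gamma/4)}|f|^q\,d\tau$. Two inclusions drive the estimate: for $u\in D(a_k,\gamma/8)$ one has $D(u,\gamma/8)\subset D(a_k,\gamma/4)$, so the sub-mean value inequality yields $\sup_{D(a_k,\gamma/8)}|f|^q\lesssim I_k$; and for $z\in D(a_k,\gamma/4)$ one has $D(a_k,\gamma/4)\subset D(z,\gamma)$, so $A^{(q)}_{\gamma}(f)(z)^q\ge I_k$. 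Using the cover, then $v_\alpha(D(a_k,\gamma/8))\approx v_\alpha(D(a_k,\gamma/4))$ from Lemma \ref{le:Estimation1}, and finally the bounded overlap,
\begin{align*}
\int_{\mathbb{B}_n}|f|^p\,dv_\alpha
&\le\sum_k v_\alpha(D(a_k,\gamma/8))\Big(\sup_{D(a_k,\gamma/8)}|f|\Big)^p
\lesssim\sum_k v_\alpha(D(a_k,\gamma/4))\,I_k^{p/q}\\
&\lesssim\sum_k\int_{D(a_k,\gamma/4)}A^{(q)}_{\gamma}(f)^p\,dv_\alpha
\le\kappa\int_{\mathbb{B}_n}A^{(q)}_{\gamma}(f)^p\,dv_\alpha,
\end{align*}
which is the desired inequality.

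The main obstacle is precisely this closing discretization together with the uniformity in $p$. For the Hardy--Littlewood operator $M^{(q)}_{\gamma}$ the naive bound on $L^p_\alpha$ requires $p>q$, and it is the reduction to the sup-maximal function $\mathrm{M}_{2\gamma}$ in the first easy link, combined with Theorem \ref{th:MaxialCharat}, that secures the range $0<p<\infty$ uniformly; the discretization in the last link is itself exponent-free, holding for every $0<p<\infty$. The genuinely delicate points are therefore the sub-mean value transfer through $\varphi_z$ and the construction of a lattice that is simultaneously covering and of bounded overlap, both of which rest on Lemmas \ref{le:Estimation1}--\ref{le:Estimation2} and the homogeneity of $(\mathbb{B}_n,\beta,d\tau)$.
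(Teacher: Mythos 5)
Your cyclic chain is correct, and it proves the theorem by a route genuinely different from the paper's. Each link checks out: the pointwise bound $M^{(q)}_{\gamma}(f)\le \mathrm{M}_{2\gamma}f$ combined with Theorem \ref{th:MaxialCharat} at aperture $2\gamma$ gives the third inequality on the whole range $0<p<\infty$; the second inequality is the pointwise comparison of $d\tau$ with $v_{\alpha}(D(z,\gamma))^{-1}dv_{\alpha}$ on a Bergman ball via Lemmas \ref{le:Estimation1} and \ref{le:Estimation2}; and the closing discretization is the standard lattice argument (a covering sequence with bounded overlap in the Bergman metric, cf.\ \cite{Z}, plus the M\"obius-invariant sub-mean value property of $|f|^q$), which is indeed exponent-free, and your inclusions $D(u,\gamma/8)\subset D(a_k,\gamma/4)\subset D(z,\gamma)$ are valid because $\beta$ is a true metric. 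The paper proceeds differently: Theorem \ref{th:tentspace} is quoted from \cite{CO2}, where it is obtained with techniques of non-homogeneous harmonic analysis in the spirit of \cite{NTV}, and the ``new proof'' given in Section \ref{BergmanOperator} realizes $A^{(q)}_{\gamma}(f)$ as the $E$-norm of the vector-valued Bergman singular integral operator $T_{\mathrm{tent}}$ on the homogeneous space $(\mathbb{B}_n,\varrho,v_{\alpha})$ and invokes Calder\'on--Zygmund theory (Theorem \ref{th:VBIO}); that route yields only the forward bound $\|A^{(q)}_{\gamma}(f)\|_{p,\alpha}\lesssim\|f\|_{p,\alpha}$ and only for $1<p<\infty$ (together with weak $(1,1)$ and $H^1$/BMO endpoints), leaving the converse and the range $0<p\le 1$ to other machinery. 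What your approach buys is a self-contained elementary proof valid for all $0<p<\infty$, exploiting the special feature that $T_{\mathrm{tent}}$ applies no derivative to $f$, so $|f|^q$ is itself plurisubharmonic and the reverse inequality falls out of the lattice; what it does not give, and what the operator-theoretic framework is built for, is the extension to the derivative kernels $T_{\mathrm{radial}}$, $T_{\mathrm{grad}}$, $T_{\mathrm{invgrad}}$ of Theorem \ref{th:AreaBIO} and the endpoint statements of Proposition \ref{prop:VCZO}.
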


Note that the Bergman metric $\beta$ is non-doubling on $\mathbb{B}^n$ and so $(\mathbb{B}_n, \beta, d v_{\alpha})$ is a non-homogeneous space. The proof of the above theorem does involve some techniques of non-homogeneous harmonic analysis developed in \cite{NTV}.

\begin{corollary}\label{cor:AreaMaximalCharat}
Suppose $\gamma >0, 1 < q < \8,$ and $\alpha \in \mathbb{R}.$ Let $0 < p < \8$ and $k$ be a nonnegative integer such that $p k + \alpha > -1.$ Then for any $f \in \mathcal{H} (\mathbb{B}_n),$ $f \in \mathcal{A}^p_{\alpha}$ if and only if $A^{(q)}_{\gamma} (\mathcal{R}^k f)$ is in $L^p (\mathbb{B}_n, d v_{\alpha})$ if and only if $M^{(q)}_{\gamma} (\mathcal{R}^k f)$ is in $L^p (\mathbb{B}_n, d v_{\alpha}),$
where
\beq\label{eq:kareaFunct}
A^{(q)}_{\gamma} (\mathcal{R}^k f) (z) = \left ( \int_{D(z,\gamma)} \big | (1-|w|^2)^k \mathcal{R}^k f (w) \big |^q d \tau (w) \right )^{\frac{1}{q}}
\eeq
and
\beq\label{eq:kmaximalFunct}
M^{(q)}_{\gamma} (\mathcal{R}^k f) (z)  = \sup_{z \in D(w, \g)}\left ( \int_{D(w,\gamma)} \big | (1-|u|^2)^k \mathcal{R}^k f (u) \big |^q  \frac{d v_{\alpha} (u)}{v_{\alpha} (D(w,\gamma))}\right )^{\frac{1}{q}}
\eeq

Moreover,
\beq\label{eq:kAreaFunctNorm}
\| f \|_{p,\alpha} \approx | f(0)| + \| A^{(q)}_{\gamma} (\mathcal{R}^k f) \|_{p, \alpha} \approx | f(0)| + \| M^{(q)}_{\gamma} (\mathcal{R}^k f) \|_{p, \alpha},
\eeq
where ``$\approx$" depends only on $\gamma, q, \alpha, p, k,$ and $n.$
\end{corollary}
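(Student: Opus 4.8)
The plan is to deduce the corollary from Theorem \ref{th:tentspace} by a transference argument: rather than applying the theorem to $f$ itself, I would apply it to the holomorphic function $g = \mathcal{R}^k f$ with the weight shifted from $\alpha$ to $\alpha + pk$. Since $pk+\alpha>-1$ by hypothesis, the shifted weight satisfies $\alpha+pk>-1$, so the hypotheses of Theorem \ref{th:tentspace} are exactly met for $g$ and the parameter $\alpha+pk$. This is the same reduction that carries Theorem \ref{th:MaxialCharat} into Corollary \ref{cor:MaxialCharat} and Theorem \ref{th:AreaCharat} into Corollary \ref{cor:AreaCharat}, with the extra wrinkle that the functionals of Theorem \ref{th:tentspace} carry no derivative, so here the reduction lands on $\mathcal{R}^k f$ rather than $\mathcal{R}^{k+1}f$.

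First I would record the reduction at the level of membership and norm. By the definition of the generalized Bergman space, $f\in\mathcal{A}^p_\alpha$ if and only if $(1-|z|^2)^k\mathcal{R}^k f\in L^p(\mathbb{B}_n,dv_\alpha)$; since $(1-|z|^2)^{pk}\,dv_\alpha$ is a constant multiple of $dv_{\alpha+pk}$, this is the same as $g=\mathcal{R}^k f\in L^p(\mathbb{B}_n,dv_{\alpha+pk})$, i.e. $g\in\mathcal{A}^p_{\alpha+pk}$, a genuine Bergman space since $\alpha+pk>-1$. Using the fact noted above that $\mathcal{A}^p_\alpha$ is independent of the choice of the defining integer (so that the corresponding norms are equivalent), I would also record $\|f\|_{p,\alpha}\approx |f(0)|+\|(1-|z|^2)^k\mathcal{R}^k f\|_{p,\alpha}\approx |f(0)|+\|g\|_{p,\alpha+pk}$.

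The heart of the matter is to compare the functionals appearing in the corollary, which carry the weight $(1-|w|^2)^k$ and are measured against $dv_\alpha$, with the bare functionals $A^{(q)}_{\gamma}(g)$ and $M^{(q)}_{\gamma}(g)$ of Theorem \ref{th:tentspace} measured against $dv_{\alpha+pk}$. For the area function, Lemma \ref{le:Estimation2} does the work: for $w\in D(z,\gamma)$ one has $(1-|w|^2)^k\approx(1-|z|^2)^k$, so this slowly varying factor may be pulled outside the Bergman-ball integral, yielding the pointwise equivalence $A^{(q)}_{\gamma}(\mathcal{R}^k f)(z)\approx(1-|z|^2)^k A^{(q)}_{\gamma}(g)(z)$. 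Raising to the $p$-th power and integrating, the factor $(1-|z|^2)^{kp}$ converts $dv_\alpha$ into $dv_{\alpha+pk}$, whence $\|A^{(q)}_{\gamma}(\mathcal{R}^k f)\|_{p,\alpha}\approx\|A^{(q)}_{\gamma}(g)\|_{p,\alpha+pk}$. Combining this with the equivalence $\|g\|_{p,\alpha+pk}\approx\|A^{(q)}_{\gamma}(g)\|_{p,\alpha+pk}$ from Theorem \ref{th:tentspace} and the reduction above settles the area-function half of \eqref{eq:kAreaFunctNorm}.

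The maximal-function half is the step I expect to require the most care, since the supremum in $M^{(q)}_{\gamma}(\mathcal{R}^k f)(z)$ runs over balls $D(w,\gamma)$ containing $z$ rather than over $D(z,\gamma)$, and the two candidate normalizations $v_\alpha(D(w,\gamma))$ and $v_{\alpha+pk}(D(w,\gamma))$ differ. To handle the former I would invoke the symmetry of the Bergman metric, so that $z\in D(w,\gamma)$ is equivalent to $w\in D(z,\gamma)$ and hence $(1-|w|^2)\approx(1-|z|^2)$ by Lemma \ref{le:Estimation2}, which lets me extract $(1-|z|^2)^k$ from the supremum. To handle the latter I would use Lemma \ref{le:Estimation1}, which gives $v_{\alpha+pk}(D(w,\gamma))\approx(1-|w|^2)^{pk}v_\alpha(D(w,\gamma))$; combined with $(1-|u|^2)^{pk}\approx(1-|w|^2)^{pk}$ on $D(w,\gamma)$, the surplus powers of $1-|w|^2$ cancel and the averages against $dv_\alpha$ and against $dv_{\alpha+pk}$ agree up to a constant. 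Together these yield $M^{(q)}_{\gamma}(\mathcal{R}^k f)(z)\approx(1-|z|^2)^k M^{(q)}_{\gamma}(g)(z)$, and integrating as before converts $dv_\alpha$ into $dv_{\alpha+pk}$ and reduces the maximal-function statement to the equivalence $\|g\|_{p,\alpha+pk}\approx\|M^{(q)}_{\gamma}(g)\|_{p,\alpha+pk}$ supplied by Theorem \ref{th:tentspace}. Finally I would assemble the three equivalences and add back the $|f(0)|$ term to obtain \eqref{eq:kAreaFunctNorm} in full.
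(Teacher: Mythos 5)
Your proposal is correct and follows exactly the route the paper indicates: reduce to $g=\mathcal{R}^k f$ in the ordinary Bergman space $\mathcal{A}^p_{\alpha+pk}$ and apply Theorem \ref{th:tentspace} with the shifted weight, using Lemma \ref{le:Estimation2} (and Lemma \ref{le:Estimation1} for the normalization of the maximal function) to absorb the factor $(1-|w|^2)^k$ on Bergman balls. The paper states this reduction in one sentence; your write-up simply fills in the same details.
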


To prove Corollary \ref{cor:AreaMaximalCharat}, one merely notices that $f \in \mathcal{A}^p_{\alpha}$ if and only if $\mathcal{R}^k f \in L^p (\mathbb{B}_n, d v_{\alpha + pk})$ and applies Theorem \ref{th:tentspace} to $\mathcal{R}^k f$ with the help of Lemma \ref{le:Estimation2}. When $\alpha > -1,$ we can take $k=1$ and then recover Theorem \ref{th:AreaCharat}.

As mentioned in Section \ref{Bergmanspace}, the family of the generalized Bergman spaces $\mathcal{A}^p_{\alpha}$ covers most of the spaces of holomorphic functions in the unit ball of $\mathbb{C}^n,$ such as the classical diagonal Besov space $B^s_p$ and the Sobolev space $W^p_{k,\beta}.$ In particular, $\mathcal{H}^p_s = \mathcal{A}^p_{\alpha}$ with $\alpha = -2 s -1,$ where $\mathcal{H}^p_s$ is the Hardy-Sobolev space defined as the set
\be
\left \{ f \in \mathcal{H} (\mathbb{B}_n ):\; \| f \|^p_{\mathcal{H}^p_s} = \sup_{0<r <1} \int_{\mathbb{S}_n} | (I + \mathcal{R} )^s f (r \zeta) |^p  d \sigma (\zeta) < \8 \right \}.
\ee
Here,
\be
(I + \mathcal{R} )^s f = \sum^{\8}_{k=0} (1 + k)^s f_k
\ee
if $f = \sum^{\8}_{k=0} f_k$ is the homogeneous expansion of $f.$ There are several real-variable characterizations of the Hardy-Sobolev spaces obtained by Ahern and Bruna \cite{AB} (see also \cite{A}). These characterizations are in terms of maximal and area functions on the admissible approach region
\be
D_{\alpha} (\eta) = \left \{ z \in \mathbb{B}_n:\; | 1- \langle z, \eta \rangle | < \frac{\alpha}{2} (1 - |z|^2) \right \},\quad \eta \in \mathbb{S}_n,\; \alpha >1.
\ee
Evidently, Corollary \ref{cor:AreaMaximalCharat} present new real-variable descriptions of the Hardy-Sobolev spaces in terms of the Bergman metric. A special case of this is a characterization of the usual Hardy space $\mathcal{H}^p= \mathcal{A}^p_{-1}$ itself.

\section{Bergman integral operators}\label{BergmanOperator}

\subsection{Vector-valued kernels and Calder\'{o}n-Zygmund operators on homogeneous spaces}

Recall that a quasimetric on a set $X$ is a map $\rho$ from $X \times X$ to $[0, \8)$ such that
\begin{enumerate}[{\rm (1)}]

\item $\rho (x, y) =0$ if and only if $x =y;$

\item $\rho (x, y) = \rho (y, x);$

\item there exists a positive constant $C \ge 1$ such that
\be
\rho (x, y) \le C [ \rho (x, z) + \rho (z, y)],\quad \forall x,y,z \in X,
\ee
(the quasi-triangular inequality).

\end{enumerate}
For any $x \in X$ and $r >0,$ the set $B(x,r) = \{ y \in X: \; \rho (x,y) < r \}$ is called a $\rho$-ball of center $x$ and radius $r.$

A space of homogeneous type is a topological space $X$ endowed with a quasimetrc $\rho$ and a Borel measure $\mu$ such that
\begin{enumerate}[{\rm (a)}]

\item for each $x \in X,$ the balls $B(x,r)$ form a basis of open neighborhoods of $x$ and, also, $\mu (B(x,r))>0$ whenever $r >0;$

\item (doubling property) there exists a constant $C>0$ such that for each $x \in X$ and $r>0,$ one has
\be
\mu (B(x,2r)) \le C \mu (B(x,r)).
\ee

\end{enumerate}
$(X, \rho, \mu)$ is called a space of homogeneous type or simply a homogeneous space. We will usually abusively call $X$ a homogeneous space instead of $(X, \rho, \mu).$ We refer to \cite{CW, Stein1993} for details on harmonic analysis on homogeneous spaces.

Let $E$ be a Banach space. Let $L^p (\mu, E)$ be the usual Bochner-Lebesgue space for $1 \le p \le \8,$ and let $L^{1, \8} (\mu, E)$ be defined by
\be
L^{1,\8} (\mu, E): = \big \{ f: \; X \mapsto E | f\; \text{is strongly measurable such that}\; \| f \|_{L^{1, \8}(\mu, E)} < \8 \big \},
\ee
where $\| f \|_{L^{1, \8}(\mu, E)} : = \sup_{t >0} t \mu \big ( \{ x \in X : \|f(x)\|_E > t \} \big ).$ Note that $\| f \|_{L^{1, \8}} $ is not actually a norm in the sense that it does not satisfy the triangle inequality. However, we still have
\be
\| c f \|_{L^{1, \8}(\mu, E)} = |c| \| f \|_{L^{1, \8}(\mu, E)} \; \text{and}\; \| f + g \|_{L^{1, \8}(\mu, E)}  \le 2 ( \| f \|_{L^{1, \8}(\mu, E)} + \| g \|_{L^{1, \8}(\mu, E)} )
\ee
for every $c \in \mathbb{C}$ and $f, g \in L^{1,\8} (\mu, E).$

If $E= \mathbb{C}$ we simply write $L^p (\mu, E) = L^p (\mu)$ and $L^{1, \8} (\mu, E) = L^{1, \8} (\mu).$

Fix $m>0$ (not necessarily an integer). Define $\triangle = \{ (x,x):\; x \in X \}.$ A vector-valued $m$-dimensional Calder\'{o}n-Zygmund kernel with respect to $\rho$ is a continuous mapping $K : \; X \times X \backslash \triangle \mapsto E$ for which we have
\begin{enumerate}[{\rm (a)}]

\item there exists a constant $C_1>0$ such that
\be
\| K (x,y) \|_E \le \frac{C_1}{\rho (x,y)},\quad \forall x,y \in X \times X \backslash \triangle;
\ee

\item there exist constants $0< \epsilon \le 1$ and $C_2, C_3 >0$ such that
\be
\| K (x,y) - K(x', y) \|_E + \|K(y, x) - K(y, x') \|_E \le C_2 \frac{\rho (x,x')^{\epsilon}}{\rho (x, y)^{m+ \epsilon}}
\ee
whenever $x, x',y \in X$ and $\rho (x, x') \le C_3 \rho (x,y).$

\end{enumerate}

Given a vector-valued $m$-dimensional Calder\'{o}n-Zygmund kernel $K,$ we can define (at least formally) a Calder\'{o}n-Zygmund singular integral operator associated with this kernel by
\be
T f (x) = \int_X K(x,y) f (y) d \mu (x).
\ee


\begin{proposition}\label{prop:VCZO}
Let $E$ be a Banach space. If a Calder\'{o}n-Zygmund singular integral operator $T$ is bounded from $L^q (\mu)$ into $L^q (\mu, E)$ for some fixed $1 \le q < \8,$ then $T$ can be extended to an operator on $L^p (\mu)$ for every $1 \le p < \8$ such that
\begin{enumerate}[{\rm (a)}]

\item $T$ is $L^p$-bounded for every $1 < p < \8,$ i.e., $\| Tf \|_{L^p (\mu, E)} \le C_p \| f \|_{L^p(\mu)};$

\item $T$ is of weak type $(1, 1),$ i.e., $\| T f \|_{L^{1, \8}(\mu, E)} \le C \| f \|_{L^1 (\mu)}$ for all $f \in L^1(\mu);$

\item $T$ is bounded from $L^{\8} (\mu)$ into $\mathrm{BMO} (X, \rho, \mu; E);$

\item $T$ is bounded from $\mathrm{H}^1 (X, \rho, \mu)$ into $L^1 (\mu).$

\end{enumerate}
\end{proposition}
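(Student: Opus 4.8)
The plan is to run the classical Calder\'{o}n--Zygmund machinery, adapted to the quasimetric space $(X, \rho, \mu)$ and to the Banach-valued target $E$, starting from the single a priori bound $T : L^q(\mu) \to L^q(\mu, E)$ together with the size and regularity conditions (a)--(b) on the kernel $K$. The logical backbone is: prove the weak type $(1,1)$ estimate (b) by hand; deduce (a) for $1 < p \le q$ by interpolation; obtain (c) directly from the kernel estimates; promote (a) to the range $q < p < \infty$ by interpolating against (c); and read off (d) on atoms.

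First I would establish the weak type $(1,1)$ bound, which is the heart of the argument. Fix $f \in L^1(\mu)$ and $t > 0$. Since $(X, \rho, \mu)$ is doubling, the Calder\'{o}n--Zygmund decomposition at height $t$ produces a countable family of $\rho$-balls $\{B_j\}$ with centers $y_j$ and a splitting $f = g + \sum_j b_j$, where $\|g\|_\infty \lesssim t$, $\|g\|_1 \le \|f\|_1$, each $b_j$ is supported in $B_j$ with $\int b_j \, d\mu = 0$ and $\|b_j\|_1 \lesssim t\,\mu(B_j)$, and $\sum_j \mu(B_j) \lesssim t^{-1}\|f\|_1$. For the good part, Chebyshev and the $L^q \to L^q$ bound give $\mu(\{x : \|Tg(x)\|_E > t/2\}) \lesssim t^{-q}\|Tg\|_{L^q(\mu,E)}^q \lesssim t^{-q}\|g\|_q^q \lesssim t^{-1}\|f\|_1$. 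For the bad part, the exceptional set $\Omega = \bigcup_j B_j^{*}$ (the $\rho$-dilations of the $B_j$, enlarged to absorb the quasi-triangle constant $C \ge 1$) already has measure $\lesssim t^{-1}\|f\|_1$; on $X \setminus \Omega$ I use the cancellation $\int b_j = 0$ to write $Tb_j(x) = \int_{B_j} [K(x,y) - K(x, y_j)]\, b_j(y)\, d\mu(y)$ and then invoke the regularity condition (b) in the second variable, so that together with doubling $\int_{X \setminus B_j^{*}} \|Tb_j(x)\|_E \, d\mu(x) \lesssim \|b_j\|_1$. Summing over $j$ and combining with the estimate for $g$ yields $\|Tf\|_{L^{1,\infty}(\mu,E)} \lesssim \|f\|_1$.

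The remaining parts follow by standard extrapolation. Marcinkiewicz interpolation between the weak $(1,1)$ bound and the strong $(q,q)$ bound gives $T : L^p(\mu) \to L^p(\mu, E)$ for $1 < p \le q$. To reach $q < p < \infty$ I would first prove (c): for $f \in L^\infty$ and a ball $B$, split $f = f\chi_{B^{*}} + f\chi_{X \setminus B^{*}}$, bound the oscillation of the local term by the $L^q$ estimate and doubling, and bound the far term (defined modulo constants through $\int [K(x,y) - K(x_0, y)]f(y)\, d\mu(y)$, which converges by (b)) by integrating the regularity condition, yielding $\|Tf\|_{\mathrm{BMO}(X,\rho,\mu;E)} \lesssim \|f\|_\infty$. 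A Fefferman--Stein sharp-maximal-function interpolation between an $L^{p_0}$ bound with $p_0 \le q$ and this $L^\infty \to \mathrm{BMO}$ bound, valid for $E$-valued functions, then delivers (a) for all $q < p < \infty$. Finally, (d) is obtained on atoms: for an $\mathrm{H}^1(X,\rho,\mu)$-atom $a$ on a ball $B$, H\"older's inequality and the $L^q$ bound give $\int_{B^{*}} \|Ta\|_E\, d\mu \lesssim \mu(B)^{1/q'}\|a\|_q \lesssim 1$, while the mean-zero property of $a$ together with (b) give $\int_{X \setminus B^{*}} \|Ta\|_E\, d\mu \lesssim \|a\|_1 \lesssim 1$; the atomic decomposition of $\mathrm{H}^1(X,\rho,\mu)$ then extends the bound to all of $\mathrm{H}^1$.

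The main obstacle is twofold. Technically, every covering and Whitney step in the Calder\'{o}n--Zygmund decomposition, as well as each use of the kernel estimates, must be carried out with the quasimetric $\rho$ rather than a genuine metric, so the ball enlargements must absorb the quasi-triangle constant $C \ge 1$ uniformly and all kernel differences must be measured in the Banach norm $\|\cdot\|_E$. Conceptually, the delicate point is producing the \emph{full} range $1 < p < \infty$ in (a) from a single scalar-to-vector $L^q$ bound: because $T$ maps scalar functions to $E$-valued functions and hence does not act on a space dual to its domain, the usual symmetric duality argument is unavailable, and the range $p > q$ must instead be routed through the $L^\infty \to \mathrm{BMO}$ estimate and a sharp-maximal (good-$\lambda$) interpolation for $E$-valued functions.
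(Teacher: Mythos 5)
Your argument is correct and is, in substance, the proof the paper intends: the paper gives no proof of its own beyond the remark that the statement follows by modifying Theorem V.3.4 of \cite{GR1985}, and that theorem is proved by exactly the scheme you describe --- Calder\'{o}n--Zygmund decomposition adapted to the quasimetric and the doubling measure, the weak $(1,1)$ bound from the cancellation of the bad part plus the H\"ormander regularity of the kernel, Marcinkiewicz interpolation up to $q$, and the atomic estimate for the $\mathrm{H}^1\to L^1$ endpoint. The one place you genuinely depart from the reference is the range $q<p<\infty$. In \cite{GR1985} this is handled by duality: the regularity hypothesis here is symmetric in the two variables, so the adjoint $T^*\colon L^{p'}(\mu,E^*)\to L^{p'}(\mu)$ is itself a Calder\'{o}n--Zygmund operator with kernel $K(y,x)$ acting by the pairing with $E^*$, it inherits the weak $(1,1)$ and $L^{p'}$ bounds for $1<p'\le q'$, and boundedness of $T$ for $q\le p<\infty$ follows because simple $E^*$-valued functions norm $L^p(\mu,E)$. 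Your assertion that the duality route is unavailable because $T$ maps scalar to vector-valued functions is therefore not accurate; but your substitute --- the $L^\infty\to\mathrm{BMO}$ estimate together with a Fefferman--Stein sharp-maximal (good-$\lambda$) interpolation applied to the scalar function $x\mapsto\|Tf(x)\|_E$ --- is equally standard on spaces of homogeneous type, and it buys you part (c) as an intermediate step rather than as an afterthought, while avoiding any discussion of whether $L^{p'}(\mu,E^*)$ norms $L^p(\mu,E)$. Either route closes the argument, and every other step of your outline matches the cited proof.
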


$\mathrm{H}^1 (X, \rho, \mu)$ and $\mathrm{BMO} (X, \rho, \mu; E)$ can be defined in a natural way, see \cite{CW} for the details. This result must be known for experts in the field of vector-valued harmonic analysis, and the proof can be obtained by merely modifying the proof of Theorem V.3.4 in \cite{GR1985}.

\subsection{Bergman singular integral operators}

We now turn our attention to the special case of the unit ball $\mathbb{B}_n.$ Recall that
\be
\varrho (z, w) = \left \{\begin{split}
& \big | |z| - |w| \big | + \Big | 1 - \frac{1}{|z| |w|} \langle z, w \rangle \Big |,\quad \text{if}\; z, w \in \mathbb{B}_n \backslash \{0\},\\
& |z| + |w|,\quad \text{otherwise}.
\end{split} \right.
\ee
It is know that $\varrho$ is a pseudo-metric on $\mathbb{B}_n$ and $(\mathbb{B}_n, \varrho, v_{\alpha})$ is a homogeneous space for $\alpha > -1$ (e.g., Lemma 2.10 in \cite{T2008}).

Let $E$ be a Banach space. Suppose $\alpha >-1.$ We are interested in vector-valued Bergman type integral operators on the unit ball $\overline{\mathbb{B}}_n$ in $\mathbb{C}^n.$ More precisely, we are interested in Bergman type integral operators whose kernels with values in $E$ satisfy the following estimates
\beq\label{eq:BergmanKernelEst1}
\|K (z, w)\|_E \le \frac{C}{\varrho (z, w)^{n+1 + \alpha}},\quad \forall (z, w) \in \mathbb{B}_n \times \mathbb{B}_n \setminus \{(\zeta, \zeta):\; \zeta \in \mathbb{B}_n \},
\eeq
and
\beq\label{eq:BergmanKernelEst2}
\|K (z, w ) - K (z, \zeta)\|_E + \|K (w, z ) - K (\zeta, z) \|_E \le \frac{C \varrho (w, \zeta)^{\beta}}{\varrho (z, \zeta)^{n+1 + \alpha + \beta}},
\eeq
for $z, w, \zeta \in \mathbb{B}_n$ so that $\varrho (z, \zeta ) \ge \delta \varrho (w, \zeta),$ with some (fixed) $\alpha > -1, \delta > 0,$ and $0 < \beta \le 1.$ That is, $K$ is $(n+1+ \alpha)$-dimensional Calder\'{o}n-Zygmund kernel $K$ with values in $E$ on the homogeneous space $(\mathbb{B}_n, \varrho, v_{\alpha}).$

Once the kernel has been defined, then a $\alpha$-time Bergman singular integral operator $T$ is defined as a Calder\'{o}n-Zygmund singular integral operator with a vector-valued kernel $K$ by
\beq\label{eq:BergmanIntOper}
T f (z) = \int_{\mathbb{B}_n} K (z, w) f (w) d v_{\alpha} (w),\quad z, w \in \mathbb{B}_n.
\eeq
If $T$ is bounded from $L^p_{\alpha}$ into $L^p (\mathbb{B}_n, v_{\alpha}; E)$ for any $1 < p <\8,$ we call it a $\alpha$-time Bergman integral operator ($\mathrm{BIO}$). We denote by $\mathrm{BIO}_{\alpha} (E)$ all such operators. If $E=\mathbb{C}$ we write $\mathrm{BIO}_{\alpha} (\mathbb{C}) = \mathrm{BIO}_{\alpha}.$

The examples that we keep in mind are the Bergman projection operator $P_{\alpha}$ from $L^2_{\alpha}$ onto ${\mathcal A}^2_{\alpha},$ which can be expressed as
\be
P_{\alpha}f(z)=\int_{\mathbb{B}_n}K_{\alpha}(z,w)f(w)dv_{\alpha}(w), \quad \forall f\in{L^1(\mathbb{B}_n,dv_{\alpha})},
\ee
where
\beq\label{eq:BergmanKer}
K_{\alpha}(z,w)=\frac{1}{\big(1-\langle z,w\rangle\big)^{n+1+\alpha}},\quad z,w\in\mathbb{B}_n \; \text{with}\; \alpha > -1.
\eeq
Indeed, we have

\begin{proposition}\label{prop:CKestimate}\; (Proposition 2.13 in \cite{T2008})
\begin{enumerate}[{\rm (i)}]

\item there exists a constant $C_1>0$ such that
\be
| K_{\alpha} (z,w) | \le \frac{C_1}{\varrho (z,w)^{n+1+\alpha}},\quad \forall z,w \in \mathbb{B}_n.
\ee

\item There are two constants $C_2, C_3 >0$ such that for all $z,w, \zeta \in \mathbb{B}_n$ satisfying
\be
\varrho (z, \zeta) > C_2 \varrho (w, \zeta)
\ee
one has
\be
| K_{\alpha} (z,w) - K_{\alpha} (z, \zeta)| \le C_3\frac{\varrho (w, \zeta)^{\frac{1}{2}}}{\varrho (z,\zeta)^{n+1+\alpha + \frac{1}{2}}}.
\ee

\end{enumerate}

\end{proposition}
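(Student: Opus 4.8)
The plan is to verify that $K_\alpha$ is an $(n+1+\alpha)$-dimensional Calder\'on--Zygmund kernel for the pseudo-metric $\varrho$, with H\"older exponent $\beta=1/2$. Write $N=n+1+\alpha$, so that $K_\alpha(z,w)=(1-\langle z,w\rangle)^{-N}$. Statement (i) is equivalent to the size comparison $\varrho(z,w)\lesssim|1-\langle z,w\rangle|$. For $z,w\neq 0$ I would split $\varrho$ into its two defining terms: the radial term obeys $\big||z|-|w|\big|\le 1-|z||w|\le 1-|\langle z,w\rangle|\le|1-\langle z,w\rangle|$ by the reverse triangle inequality and Cauchy--Schwarz, while the angular term satisfies $\bigl|1-\langle z,w\rangle/(|z||w|)\bigr|=|z|^{-1}|w|^{-1}\bigl|(1-\langle z,w\rangle)-(1-|z||w|)\bigr|\le 2|z|^{-1}|w|^{-1}|1-\langle z,w\rangle|$. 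This proves (i) wherever $|z||w|$ is bounded below; the region where $|z||w|$ is small (including $z=0$ or $w=0$) is handled directly, since there $|1-\langle z,w\rangle|\gtrsim 1$ while $\varrho$ is bounded.

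For (ii) the plan is a first-order estimate via the fundamental theorem of calculus. Put $a=1-\langle z,w\rangle$ and $b=1-\langle z,\zeta\rangle$; both lie in the open right half-plane since $\mathrm{Re}\,a\ge 1-|z||w|>0$, so the segment $c_t=b+t(a-b)$, $t\in[0,1]$, avoids the origin and $s\mapsto s^{-N}$ is holomorphic along it. Since $b-a=\langle z,w\rangle-\langle z,\zeta\rangle=\langle z,w-\zeta\rangle$, this gives
\[
K_\alpha(z,w)-K_\alpha(z,\zeta)=N\,\langle z,w-\zeta\rangle\int_0^1 c_t^{-N-1}\,dt .
\]
It then remains to bound the vector factor $|\langle z,w-\zeta\rangle|$ from above and the denominators $|c_t|$ from below.

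The \emph{crux} is the geometric estimate
\[
|\langle z,w-\zeta\rangle|\lesssim \varrho(w,\zeta)^{1/2}\,\varrho(z,\zeta)^{1/2}+\varrho(w,\zeta),
\]
valid for all $z,w,\zeta$. I would prove it using the unitary invariance of both $\langle\cdot,\cdot\rangle$ and $\varrho$ to rotate $\zeta$ onto the first axis, $\zeta=|\zeta|e_1$. Writing $z=(z_1,z')$ and $w=(w_1,w')$ one computes $\langle z,w-\zeta\rangle=z_1(\bar w_1-|\zeta|)+\langle z',w'\rangle$, hence $|\langle z,w-\zeta\rangle|\le |w_1-|\zeta||+|z'|\,|w'|$ because $|z_1|\le 1$. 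In these coordinates $\varrho(w,\zeta)=\big||w|-|\zeta|\big|+\bigl|1-w_1/|w|\bigr|$ and similarly for $z$. Elementary inequalities give $|w_1-|\zeta||\le 2\varrho(w,\zeta)$, and from $|w'|^2=|w|^2-|w_1|^2\le 2(|w|-|w_1|)\le 2\varrho(w,\zeta)$ together with the analogous $|z'|^2\le 2\varrho(z,\zeta)$ one gets $|z'|\,|w'|\le 2\varrho(z,\zeta)^{1/2}\varrho(w,\zeta)^{1/2}$. Adding these proves the claim, and the degenerate cases with a point equal to $0$ are checked directly.

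Finally I would combine everything under the hypothesis $\varrho(z,\zeta)>C_2\varrho(w,\zeta)$ with $C_2$ large. Then $\varrho(w,\zeta)\le C_2^{-1/2}\varrho(w,\zeta)^{1/2}\varrho(z,\zeta)^{1/2}$, so the geometric estimate sharpens to $|\langle z,w-\zeta\rangle|\lesssim\varrho(w,\zeta)^{1/2}\varrho(z,\zeta)^{1/2}$. For the denominator, (i) yields $|b|=|1-\langle z,\zeta\rangle|\gtrsim\varrho(z,\zeta)$, whence $|c_t|\ge|b|-|\langle z,w-\zeta\rangle|\ge\varrho(z,\zeta)^{1/2}\bigl(c\,\varrho(z,\zeta)^{1/2}-C\varrho(w,\zeta)^{1/2}\bigr)\gtrsim\varrho(z,\zeta)$ once $C_2$ is taken large enough to absorb the subtracted term. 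Inserting $|c_t|^{-N-1}\lesssim\varrho(z,\zeta)^{-N-1}$ and the bound on $|\langle z,w-\zeta\rangle|$ into the integral representation gives $|K_\alpha(z,w)-K_\alpha(z,\zeta)|\lesssim\varrho(w,\zeta)^{1/2}\varrho(z,\zeta)^{-N-1/2}$, which is exactly (ii) with $\beta=1/2$. I expect the main obstacle to be the geometric estimate and, relatedly, the calibration of $C_2$ guaranteeing the lower bound on $|c_t|$; the size estimate (i) and the calculus step are routine.
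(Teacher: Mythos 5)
Your proof is correct, but note that the paper itself offers no proof of this statement: it is imported verbatim as Proposition 2.13 of Tchoundja's paper \cite{T2008}, so there is nothing in the present text to compare against line by line. What you have done is reconstruct, essentially, Tchoundja's argument in a self-contained way. Part (i) reduces, as you say, to $\varrho(z,w)\lesssim|1-\langle z,w\rangle|$ (this is exactly the inequality the paper later quotes as ``Eq.\ (6) in \cite{T2008}''), and your two-term splitting plus the separate treatment of the region where $|z||w|$ is small is a complete proof of it. For part (ii), the two nontrivial ingredients are precisely the ones you isolate: the holomorphic fundamental-theorem-of-calculus representation $K_\alpha(z,w)-K_\alpha(z,\zeta)=N\langle z,w-\zeta\rangle\int_0^1 c_t^{-N-1}\,dt$ (legitimate because $\operatorname{Re}(1-\langle z,w\rangle)\ge 1-|z||w|>0$ keeps the segment in the right half-plane, where the principal branch of $s^{-N}$ is defined even for non-integer $N=n+1+\alpha$), and the bilinear estimate $|\langle z,w-\zeta\rangle|\lesssim\varrho(w,\zeta)^{1/2}\varrho(z,\zeta)^{1/2}+\varrho(w,\zeta)$. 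Your unitary-rotation proof of the latter is clean and checks out: $\big||w|-|w_1|\big|\le|w|\,|1-w_1/|w||$ gives $|w'|^2\le 2\varrho(w,\zeta)$ and likewise for $z'$, and the degenerate cases where a point is $0$ are trivial. The only point requiring care is the calibration you already flag: the hypothesis must be read with $C_2$ chosen large enough that $4C_2^{-1/2}$ is dominated by the constant in $|1-\langle z,\zeta\rangle|\gtrsim\varrho(z,\zeta)$, which is permissible since the statement only asserts the existence of some $C_2$. In short, the proposal is a valid free-standing proof of a result the paper merely cites.
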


It is well known that $P_{\alpha}$ extends to a bounded operator on $L^p_{\alpha}$ for $1 < p < \8$ (e.g., Theorem 2.11 in \cite{Z}). Thus, we have $P_{\alpha} \in \mathrm{BIO}_{\alpha}.$ This fact will be also concluded from the following result, which is clearly a special case of Proposition \ref{prop:VCZO}.

\begin{theorem}\label{th:VBIO}
Let $E$ be a Banach space and $\alpha > -1.$ Suppose $T$ is a Calder\'{o}n-Zygmund singular integral operator associated with a kernel satisfying \eqref{eq:BergmanKernelEst1} and \eqref{eq:BergmanKernelEst2}. If $T$ is bounded on $L^q (v_{\alpha})$ for some fixed $1< q < \8,$ then $T$ is bounded from $L^p (\mathbb{B}_n, v_{\alpha})$ into $L^p (\mathbb{B}_n, v_{\alpha}; E)$ for every $1 < p < \8,$ and is of weak type $(1, 1).$
\end{theorem}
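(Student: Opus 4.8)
The plan is to deduce the statement directly from Proposition~\ref{prop:VCZO} by verifying that all of its hypotheses are met on the specific homogeneous space $(\mathbb{B}_n, \varrho, v_{\alpha})$. First I would fix the identification $X = \mathbb{B}_n$, $\rho = \varrho$, and $\mu = v_{\alpha}$, and recall the fact (cited earlier, cf. Lemma~2.10 in \cite{T2008}) that $(\mathbb{B}_n, \varrho, v_{\alpha})$ is a space of homogeneous type whenever $\alpha > -1$, so that the abstract framework of Proposition~\ref{prop:VCZO} is available. With these identifications I would set $m = n+1+\alpha$, which is positive since $\alpha > -1$ and $n \ge 1$, and $\epsilon = \beta \in (0,1]$.

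Next I would check that the kernel $K$ is a vector-valued $m$-dimensional Calder\'{o}n-Zygmund kernel with respect to $\varrho$ in the sense of the definition preceding Proposition~\ref{prop:VCZO}. The size bound is immediate: \eqref{eq:BergmanKernelEst1} is exactly the size condition with $C_1 = C$ and exponent $m = n+1+\alpha$. For the smoothness bound I would match \eqref{eq:BergmanKernelEst2} with the regularity condition by taking the perturbed pair to be $\{w,\zeta\}$ and the distinguished far point to be $z$; the two summands $\|K(z,w)-K(z,\zeta)\|_E$ and $\|K(w,z)-K(\zeta,z)\|_E$ in \eqref{eq:BergmanKernelEst2} are then precisely the two summands in the definition written in the opposite order, with numerator $\varrho(w,\zeta)^{\beta}$ playing the role of $\rho(x,x')^{\epsilon}$.

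The only point requiring a short argument, which I expect to be the main (though mild) obstacle, is the comparability of the denominators. The abstract condition carries $\varrho(w,z)^{m+\beta}$, whereas \eqref{eq:BergmanKernelEst2} carries $\varrho(z,\zeta)^{m+\beta}$; but under the hypothesis $\varrho(z,\zeta) \ge \delta\,\varrho(w,\zeta)$ the quasi-triangle inequality forces $\varrho(z,w) \approx \varrho(z,\zeta)$, so the two denominators are comparable up to a constant depending only on $\delta$ and the quasi-metric constant of $\varrho$. The same observation shows that the structural constraint $\varrho(w,\zeta) \le C_3\,\varrho(w,z)$ is equivalent to the stated constraint $\varrho(z,\zeta) \ge \delta\,\varrho(w,\zeta)$ after adjusting the constants. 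Thus the translation of the kernel hypotheses is complete, and this bookkeeping is the entire computational content.

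Finally, the hypothesis that $T$ is bounded on $L^q(v_{\alpha})$ for some fixed $1 < q < \infty$ is, read for the vector-valued operator, exactly the assertion that $T$ is bounded from $L^q(\mu)$ into $L^q(\mu, E)$, which is the standing hypothesis of Proposition~\ref{prop:VCZO}. Invoking that proposition, its conclusions (a) and (b) yield the boundedness of $T$ from $L^p(\mathbb{B}_n, v_{\alpha})$ into $L^p(\mathbb{B}_n, v_{\alpha}; E)$ for every $1 < p < \infty$ together with the weak type $(1,1)$ estimate, which is precisely the claim. No serious difficulty is expected beyond the verification of the kernel conditions, since the analytic substance of the theorem is contained entirely in Proposition~\ref{prop:VCZO}.
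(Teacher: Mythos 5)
Your proposal is correct and follows exactly the route the paper intends: the paper offers no written proof, merely asserting that the theorem is ``clearly a special case of Proposition~\ref{prop:VCZO}'' on the homogeneous space $(\mathbb{B}_n,\varrho,v_{\alpha})$, and your verification of the kernel-condition translation (including the comparability $\varrho(z,w)\approx\varrho(z,\zeta)$ under $\varrho(z,\zeta)\ge\delta\,\varrho(w,\zeta)$) is precisely the bookkeeping the authors leave implicit. The only cosmetic point worth noting is that the size condition in the paper's kernel definition is printed as $C_1/\rho(x,y)$ rather than $C_1/\rho(x,y)^m$, an evident typo that your reading with exponent $m=n+1+\alpha$ silently and correctly repairs.
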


\subsection{Area functions as vector-valued Bergman integral operators}

Given $\gamma > 0$ and $1 < q < \8.$ Let $E = L^q (\mathbb{B}_n, \chi_{D(0, \gamma)} d \tau).$ We consider the operator
\beq\label{eq:BIOtent}
[T_{\mathrm{tent}} f (z)] (w) = \int_{\mathbb{B}_n} \frac{ f (u) d v_{\alpha} (u)}{(1- \langle \varphi_z (w), u \rangle)^{n+1+\alpha}},\quad \forall z, w \in \mathbb{B}_n,
\eeq
with the kernel
\be
K_{\mathrm{tent}} (z,u) (w) = \frac{ 1}{(1- \langle \varphi_z (w), u \rangle)^{n+1+\alpha}}.
\ee
By the reproduce kernel formula (e.g., Theorem 2.2 in \cite{Z}) we have
\be
[T_{\mathrm{tent}} f (z)](w) = f (\varphi_z (w)), \quad \forall f \in \mathcal{H} (\mathbb{B}_n),
\ee
and hence
\be
\| T_{\mathrm{tent}} f (z) \|_E = A^{(q)}_{\g} (f) (z),
\ee
for any $f \in \mathcal{H} (\mathbb{B}_n).$

\begin{theorem}\label{th:TentBIO}
Let $\g >0, \alpha > -1, 1 < q < \8,$ and $1 < p < \8.$ Then $T_{\mathrm{tent}} \in \mathrm{BIO}_{\alpha} (E).$ Consequently,
\be
\| A^{(q)}_{\g} (f) \|_{L^p_{\alpha}} \lesssim \| f \|_{L^p_{\alpha}},\quad \forall f \in \mathcal{A}^p_{\alpha} (\mathbb{B}_n).
\ee
\end{theorem}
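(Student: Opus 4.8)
The plan is to verify that $T_{\mathrm{tent}}$ satisfies the hypotheses of Theorem \ref{th:VBIO}: that its kernel $K_{\mathrm{tent}}$ is an $(n+1+\alpha)$-dimensional Calder\'{o}n-Zygmund kernel with values in $E$ (i.e. obeys \eqref{eq:BergmanKernelEst1} and \eqref{eq:BergmanKernelEst2}), and that $T_{\mathrm{tent}}$ is bounded from $L^q(v_\alpha)$ into $L^q(v_\alpha;E)$ for one exponent $q$. Granting these, Theorem \ref{th:VBIO} yields boundedness $L^p_\alpha\to L^p(v_\alpha;E)$ for every $1<p<\infty$, which is exactly $T_{\mathrm{tent}}\in\mathrm{BIO}_\alpha(E)$. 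The asserted inequality is then immediate from the pointwise identity $\|T_{\mathrm{tent}}f(z)\|_E=A^{(q)}_\gamma(f)(z)$ recorded above for holomorphic $f$, since $\|A^{(q)}_\gamma(f)\|_{p,\alpha}=\|T_{\mathrm{tent}}f\|_{L^p(v_\alpha;E)}\lesssim\|f\|_{p,\alpha}$.

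For the $L^q$ base case I would argue directly, so as to keep the proof non-circular (using the area-function characterization here would beg the question). Since $K_\alpha(\varphi_z(w'),u)=(1-\langle\varphi_z(w'),u\rangle)^{-(n+1+\alpha)}$, for \emph{arbitrary} $f\in L^q_\alpha$ one has $[T_{\mathrm{tent}}f(z)](w')=(P_\alpha f)(\varphi_z(w'))$. Changing variables $v=\varphi_z(w')$ and using the invariance of $d\tau$ together with $\varphi_z(D(0,\gamma))=D(z,\gamma)$ gives $\|T_{\mathrm{tent}}f(z)\|_E^q=\int_{D(z,\gamma)}|P_\alpha f(v)|^q\,d\tau(v)$. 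Integrating in $z$ against $dv_\alpha$ and applying Tonelli's theorem (since $v\in D(z,\gamma)$ is equivalent to $z\in D(v,\gamma)$) produces the weight $v_\alpha(D(v,\gamma))\,d\tau(v)$, which by Lemma \ref{le:Estimation1} is comparable to $dv_\alpha(v)$:
\[
\int_{\mathbb{B}_n}\|T_{\mathrm{tent}}f(z)\|_E^q\,dv_\alpha(z)=\int_{\mathbb{B}_n}|P_\alpha f(v)|^q\,v_\alpha(D(v,\gamma))\,d\tau(v)\approx\|P_\alpha f\|_{q,\alpha}^q .
\]
The known $L^q_\alpha$-boundedness of the Bergman projection $P_\alpha$ then closes this step.

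The size estimate \eqref{eq:BergmanKernelEst1} is straightforward: for $w'\in D(0,\gamma)$ the point $\varphi_z(w')$ lies in $D(z,\gamma)$, so Lemma \ref{le:Estimation4} gives $|1-\langle\varphi_z(w'),u\rangle|\approx|1-\langle z,u\rangle|$ uniformly in $w'$, and as $\tau(D(0,\gamma))<\infty$ one finds $\|K_{\mathrm{tent}}(z,u)\|_E\approx|1-\langle z,u\rangle|^{-(n+1+\alpha)}\lesssim\varrho(z,u)^{-(n+1+\alpha)}$ by Proposition \ref{prop:CKestimate}(i). The regularity estimate \eqref{eq:BergmanKernelEst2} is the heart of the matter. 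The crucial point is that, pointwise in $w'$, both kernel differences reduce to differences of the ordinary Bergman kernel: fixing the first argument, $[K_{\mathrm{tent}}(z,w)-K_{\mathrm{tent}}(z,\zeta)](w')=K_\alpha(\varphi_z(w'),w)-K_\alpha(\varphi_z(w'),\zeta)$; and fixing the second argument, after using $K_\alpha(a,b)=\overline{K_\alpha(b,a)}$, the modulus of $[K_{\mathrm{tent}}(w,z)-K_{\mathrm{tent}}(\zeta,z)](w')$ equals $|K_\alpha(z,\varphi_w(w'))-K_\alpha(z,\varphi_\zeta(w'))|$. To each I would apply the H\"older-type regularity of Proposition \ref{prop:CKestimate}(ii) (which supplies the exponent $\beta=\tfrac12$) and integrate over the fixed finite-measure ball $D(0,\gamma)$.

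What makes this delicate, and what I expect to be the main obstacle, is matching the $\varrho$-exponents uniformly in $w'$. For the second-argument difference this requires $\varrho(\varphi_z(w'),\zeta)\approx\varrho(z,\zeta)$; since $\varphi_z(w')\in D(z,\gamma)$, Lemmas \ref{le:Estimation2} and \ref{le:Estimation4} give $\varrho(z,\varphi_z(w'))\lesssim 1-|z|^2$, and the quasi-triangle inequality upgrades this to the needed comparison whenever $\zeta$ is not too close to $z$. The complementary near-diagonal configurations, where $\zeta$ (hence $w$) lies in a fixed Bergman ball about $z$, must be handled separately by estimating the $D(0,\gamma)$-integral directly and absorbing the integrable singularity, exploiting the freedom to fix the constant $\delta$ in \eqref{eq:BergmanKernelEst2}. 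For the first-argument difference one additionally needs the Lipschitz-type bound $\varrho(\varphi_w(w'),\varphi_\zeta(w'))\lesssim\varrho(w,\zeta)$, uniform for $w'\in D(0,\gamma)$, which expresses the $\varrho$-continuity of the map $a\mapsto\varphi_a(w')$. This bound, together with the near-diagonal analysis, is precisely where the sharp estimates of the Bergman kernel and Bergman metric enter, and is the technically heaviest part of the argument.
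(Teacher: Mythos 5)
Your proposal is correct and follows essentially the same route as the paper: reduce to Theorem \ref{th:VBIO} by establishing the $L^q$ base case via the identity $[T_{\mathrm{tent}}f(z)](w)=P_\alpha f(\varphi_z(w))$, change of variables, and Lemma \ref{le:Estimation1}, then verify the size bound via Lemmas \ref{le:Estimation2}--\ref{le:Estimation4}, and handle the regularity bound by reducing to H\"older estimates for $K_\alpha$ uniformly over $w'\in D(0,\gamma)$ (the paper packages this as Lemma \ref{le:BergmanKernelEst} and a symmetric modification of Tchoundja's argument). The two technical points you flag as the heart of the matter --- uniformity of $\varrho(\varphi_z(w'),\zeta)\approx\varrho(z,\zeta)$ and the bound $\varrho(\varphi_u(w'),\varphi_\zeta(w'))\lesssim\varrho(u,\zeta)$ --- are exactly the ones the paper addresses (and likewise leaves largely to the reader).
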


\begin{proof}
Let $f \in L^q_{\alpha} (\mathbb{B}_n).$ Then
\be\begin{split}
\|T_{\mathrm{tent}} f \|^q_{L^q (v_{\alpha}, E)} &= \int_{\mathbb{B}_n} \int_{\mathbb{B}_n} \left | \int_{\mathbb{B}_n} \frac{ f (u) d v_{\alpha} (u)}{(1- \langle \varphi_z (w), u \rangle)^{n+1+\alpha}} \right |^q \chi_{D(0, \gamma)} (w) d \tau (w) d v_{\alpha} (z)\\
& = \int_{\mathbb{B}_n} \int_{\mathbb{B}_n} \left | P_{\alpha} f (w) \right |^q \chi_{D(z, \gamma)} (w) d \tau (w) d v_{\alpha} (z)\\
& \approx \| P_{\alpha} f \|^q_{L^q_{\alpha}} \lesssim \| f \|^q_{L^q_{\alpha}}
\end{split}\ee
by the $L^q$-boundedness of $P_{\alpha}$ for $1 < q < \8.$ This concludes that $T_{\mathrm{tent}}$ is bounded from $L^q_{\alpha}$ into $L^q ( v_{\alpha}, E).$

By Theorem \ref{th:VBIO}, it remains to show that $K_{\mathrm{tent}}$ satisfies the conditions \eqref{eq:BergmanKernelEst1} and \eqref{eq:BergmanKernelEst2}. It is easy to check the condition \eqref{eq:BergmanKernelEst1}. Indeed, by Lemmas \ref{le:Estimation1} and \ref{le:Estimation4} and Proposition \ref{prop:CKestimate} (i) we have
\be\begin{split}
\| K_{\mathrm{tent}} (z,u) \|_E & = \Big ( \int_{\mathbb{B}_n} \frac{ 1}{|1- \langle \varphi_z (w), u \rangle |^{2(n+1+\alpha)}} \chi_{D(0, \gamma)} (w) d \tau (w)\Big )^{\frac{1}{2}}\\
& = \Big ( \int_{D(z, \gamma)} \frac{ 1}{|1- \langle w, u \rangle |^{2(n+1+\alpha)}} d \tau (w)\Big )^{\frac{1}{2}}\\
& \lesssim \frac{ 1}{|1- \langle z, u \rangle |^{n+1+\alpha}}\\
& \le \frac{C}{\varrho (z, u)^{n+1+\alpha}}.
\end{split}\ee
This concludes that $K_{\mathrm{tent}}$ satisfies \eqref{eq:BergmanKernelEst1}.

To check the condition \eqref{eq:BergmanKernelEst2}, we need the following variant of Proposition \ref{prop:CKestimate} (ii).

\begin{lemma}\label{le:BergmanKernelEst}
There exist two constants $C_1, C_2 >0$ such that for all $z, u, \zeta \in \mathbb{B}_n$ satisfying
\be
\varrho (z, \zeta) > C_1 \varrho (u, \zeta)
\ee
one has
\be
| K_{\alpha} (w,u) - K_{\alpha} (w, \zeta)| \le C_2\frac{\varrho (u, \zeta)^{\frac{1}{2}}}{\varrho (z,\zeta)^{n+1+\alpha + \frac{1}{2}}},
\ee
for all $w \in D(z, \g).$
\end{lemma}

The proof can be obtained by slightly modifying the proof of Proposition 2.13 (2) in \cite{T2008} with the help of Lemmas \ref{le:Estimation2} and \ref{le:Estimation4}. We omit the details.

Now we turn out to proceed our proof. Suppose $z,u, \zeta \in \mathbb{B}_n.$ Note that
\be\begin{split}
\| K_{\mathrm{tent}} (z,& u) - K_{\mathrm{tent}} (z,\zeta)\|_E\\
= & \Big ( \int_{\mathbb{B}_n} \Big | \frac{ 1}{(1- \langle \varphi_z (w), u \rangle )^{n+1+\alpha}} - \frac{ 1}{(1- \langle \varphi_z (w), \zeta \rangle )^{n+1+\alpha}} \Big |^2 \chi_{D(0, \gamma)} (w) d \tau (w)\Big )^{\frac{1}{2}}\\
= & \Big ( \int_{\chi_{D(z, \gamma)}} \Big | \frac{ 1}{(1- \langle w, u \rangle )^{n+1+\alpha}} - \frac{ 1}{(1- \langle w, \zeta \rangle )^{n+1+\alpha}} \Big |^2 d \tau (w) \Big )^{\frac{1}{2}}\\
= &  \Big ( \int_{\chi_{D(z, \gamma)}} \Big | K_{\alpha} (w,u) - K_{\alpha} (w, \zeta) \Big |^2 d \tau (w) \Big )^{\frac{1}{2}}.
\end{split}\ee
Then, by Lemma \ref{le:BergmanKernelEst} there exist two constants $C_1, C_2 >0$ such that for all $z, u, \zeta \in \mathbb{B}_n,$
\be
\| K_{\mathrm{tent}} (z,u) - K_{\mathrm{tent}} (z,\zeta)\|_E \le C_2 \frac{\varrho (u, \zeta)^{\frac{1}{2}}}{\varrho (z,\zeta)^{n+1+\alpha + \frac{1}{2}}}
\ee
whenever $\varrho (z, \zeta) > C_1 \varrho (u, \zeta).$

On the other hand, since
\be\begin{split}
\| K_{\mathrm{tent}} (u, & z) - K_{\mathrm{tent}} (\zeta, z)\|_E\\
= & \Big ( \int_{\mathbb{B}_n} \Big | \frac{ 1}{(1- \langle \varphi_u (w), z \rangle )^{n+1+\alpha}} - \frac{ 1}{(1- \langle \varphi_{\zeta} (w), z \rangle )^{n+1+\alpha}} \Big |^2 \chi_{D(0, \gamma)} (w) d \tau (w)\Big )^{\frac{1}{2}}\\
= & \Big ( \int_{\mathbb{B}_n} \Big | K_{\alpha} ( \varphi_u (w), z) - K_{\alpha} ( \varphi_{\zeta} (w), z) \Big |^2 \chi_{D(0, \gamma)} (w) d \tau (w) \Big )^{\frac{1}{2}}\\
\end{split}\ee
and
\be
\varrho (\varphi_u (w), \varphi_{\zeta} (w)) \lesssim | 1- \langle \varphi_u (w), \varphi_{\zeta} (w) \rangle | \thickapprox | 1- \langle u, \zeta \rangle |,\quad \forall w \in D(0, \gamma),
\ee
by Lemma \ref{le:Estimation4} and the inequality
\be
\varrho (z, w) \lesssim | 1 - \langle z, w \rangle |
\ee
(e.g., Eq.(6) in \cite{T2008}), then by slightly modifying the proof of Proposition 2.13 (2) in \cite{T2008} we can prove that
\be
\| K_{\mathrm{tent}} (u, z) - K_{\mathrm{tent}} (\zeta, z)\|_E \lesssim \frac{\varrho (u, \zeta)^{\frac{1}{2}}}{\varrho (z,\zeta)^{n+1+\alpha + \frac{1}{2}}}.
\ee
The details are left to readers. This completes the proof.
\end{proof}

Evidently, we can define:
\begin{enumerate}[{\rm (i)}]

\item The radial area integral operator
\beq\label{eq:BIOradial}
[T_{\mathrm{radial}} f (z)](w) = \int_{\mathbb{B}_n} K_{\mathrm{radial}} (z,u) (w) f (u) d v_{\alpha} (u) ,\quad \forall z, w \in \mathbb{B}_n,
\eeq
with the Bergman kernel
\be
K_{\mathrm{radial}} (z,u) (w) = (n+1+ \alpha) \frac{(1- |\varphi_z (w)|^2) \langle \varphi_z (w), u \rangle}{(1 - \langle \varphi_z (w), u \rangle )^{n+2+ \alpha}},\quad \forall z, u, w \in \mathbb{B}_n.
\ee
It is easy to check that
\be
[T_{\mathrm{radial}} f (z)](w) = (1- |\varphi_z (w)|^2) \mathcal{R} f (\varphi_z (w))
\ee
for any $f \in \H (\mathbb{B}_n).$

\item The complex gradient area integral operator
\beq\label{eq:BIOradial}
[T_{\mathrm{grad}} f (z)](w) = \int_{\mathbb{B}_n} K_{\mathrm{grad}} (z,u) (w) f (u) d v_{\alpha} (u) ,\quad \forall z, w \in \mathbb{B}_n,
\eeq
with the Bergman kernel
\be
K_{\mathrm{grad}} (z,u) (w) = \frac{(n+1+ \alpha) (1- |\varphi_z (w)|^2) \bar{u} }{(1 - \langle \varphi_z (w), u \rangle )^{n+2+ \alpha}},\quad \forall z, u, w \in \mathbb{B}_n.
\ee
It is easy to check that
\be
[T_{\mathrm{grad}} f (z)](w) = (1- |\varphi_z (w)|^2) \nabla f (\varphi_z (w))
\ee
for any $f \in \H (\mathbb{B}_n).$

\item The invariant gradient area integral operator
\beq\label{eq:BIOradial}
[T_{\mathrm{invgrad}} f (z)](w) = \int_{\mathbb{B}_n} K_{\mathrm{invgrad}} (z,u) (w) f (u) d v_{\alpha} (u),\quad \forall z, w \in \mathbb{B}_n,
\eeq
with the Bergman kernel
\be
K_{\mathrm{invgrad}} (z,u) (w) = (n+1+\alpha)\frac{(1-|\varphi_{z}(\omega)|^2)^{n+1+\alpha}\overline{\varphi_{\varphi_{z}(\omega)}(u)}}{|1-\langle \varphi_{z}(\omega), u \rangle|^{2(n+1+\alpha)}},\quad \forall z, u, w \in \mathbb{B}_n.
\ee
It is easy to check that
\be
[T_{\mathrm{invgrad}} f (z)](w) = \tilde{\nabla} f (\varphi_z (w))
\ee
for any $f \in \H (\mathbb{B}_n).$

\end{enumerate}

Similarly, we have

\begin{theorem}\label{th:AreaBIO}
Let $\g >0, 1 < q < \8,$ and $\alpha > -1.$ Then $T_{\mathrm{radial}}, T_{\mathrm{grad}},$ and $T_{\mathrm{invgrad}}$ are all in $\mathrm{BIO}_{\alpha} (E).$ Consequently, $A_{\g}^{(q)} (\mathcal{R} f), A_{\g}^{(q)} (\nabla f),$ and $A_{\g}^{(q)} (\tilde{\nabla} f)$ are all bounded on $\A^p_{\alpha}$ for every $1 < p < \8.$
\end{theorem}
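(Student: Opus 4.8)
The plan is to place each of the three operators into $\mathrm{BIO}_{\alpha}(E)$ by verifying the hypotheses of Theorem \ref{th:VBIO}, following verbatim the template used for $T_{\mathrm{tent}}$ in the proof of Theorem \ref{th:TentBIO}; the ``consequently'' clause is then immediate. Indeed, for holomorphic $f$ the substitution $\eta=\varphi_z(w)$ together with the invariance of $d\tau$ gives $\|T_{\mathrm{radial}}f(z)\|_E = A^{(q)}_{\g}(\mathcal{R}f)(z)$, and likewise $\|T_{\mathrm{grad}}f(z)\|_E = A^{(q)}_{\g}(\nabla f)(z)$ and $\|T_{\mathrm{invgrad}}f(z)\|_E = A^{(q)}_{\g}(\widetilde{\nabla}f)(z)$, so once the operators are shown to map $L^p_{\alpha}$ into $L^p(\mathbb{B}_n, v_{\alpha}; E)$ for $1<p<\8$, the stated bounds on the area functions follow by restricting to $f\in\A^p_{\alpha}$.

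First I would check $L^q$-boundedness for the single fixed exponent $q$. Using the reproducing formula as in Theorem \ref{th:TentBIO}, one has $[T_{\mathrm{radial}}f(z)](w) = (1-|\varphi_z(w)|^2)\mathcal{R}(P_{\alpha}f)(\varphi_z(w))$ for general $f\in L^q_{\alpha}$, and the substitution $\eta=\varphi_z(w)$ followed by Fubini and Lemma \ref{le:Estimation1} reduces $\|T_{\mathrm{radial}}f\|^q_{L^q(v_{\alpha},E)}$ to $\approx \int_{\mathbb{B}_n}|(1-|\eta|^2)\mathcal{R}(P_{\alpha}f)(\eta)|^q\,dv_{\alpha}(\eta)$. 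This is $\lesssim\|f\|^q_{q,\alpha}$, since $P_{\alpha}$ is bounded on $L^q_{\alpha}$ and $g\mapsto(1-|\cdot|^2)\mathcal{R}g$ maps $\A^q_{\alpha}$ boundedly into $L^q_{\alpha}$ (an elementary consequence of the Cauchy/sub-mean-value estimate for holomorphic functions and Lemma \ref{le:Estimation1}). The same reduction applies to $T_{\mathrm{grad}}$ and $T_{\mathrm{invgrad}}$, with $\nabla(P_{\alpha}f)$ and $\widetilde{\nabla}(P_{\alpha}f)$ in place of $\mathcal{R}(P_{\alpha}f)$.

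Next I would verify the size condition \eqref{eq:BergmanKernelEst1}. After the substitution $\eta=\varphi_z(w)$ the $E$-norm of each kernel becomes an integral over $D(z,\g)$ of a power of the scalar Bergman kernel, on which Lemma \ref{le:Estimation2} (giving $1-|\eta|^2\approx 1-|z|^2$) and Lemma \ref{le:Estimation4} (giving $|1-\langle\eta,u\rangle|\approx|1-\langle z,u\rangle|$) force the integrand to be essentially constant in $\eta$, while $\tau(D(z,\g))=\tau(D(0,\g))$ is a fixed constant. This yields $\|K_{\mathrm{radial}}(z,u)\|_E,\|K_{\mathrm{grad}}(z,u)\|_E\lesssim (1-|z|^2)|1-\langle z,u\rangle|^{-(n+2+\alpha)}$ and $\|K_{\mathrm{invgrad}}(z,u)\|_E\lesssim (1-|z|^2)^{n+1+\alpha}|1-\langle z,u\rangle|^{-2(n+1+\alpha)}$; using $1-|z|^2\lesssim|1-\langle z,u\rangle|$ together with $\varrho(z,u)\lesssim|1-\langle z,u\rangle|$, both are bounded by $C\varrho(z,u)^{-(n+1+\alpha)}$, which is \eqref{eq:BergmanKernelEst1}.

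The main obstacle is the smoothness condition \eqref{eq:BergmanKernelEst2}, which is exactly where the tent proof invoked Lemma \ref{le:BergmanKernelEst} and then left details aside. For the first difference $\|K_{*}(z,u)-K_{*}(z,\zeta)\|_E$ the substitution $\eta=\varphi_z(w)$ again converts the $E$-norm into an integral over $D(z,\g)$ of $|K^{\mathrm{sc}}_{*}(\eta,u)-K^{\mathrm{sc}}_{*}(\eta,\zeta)|^q$, where $K^{\mathrm{sc}}_{*}$ is the scalar kernel; one then needs a Hölder estimate in the second slot, uniform over $\eta\in D(z,\g)$, of the type proved in Proposition \ref{prop:CKestimate}(ii), but for the perturbed kernels carrying the extra factor $(1-\langle\eta,u\rangle)^{-1}$ (radial, grad) or the modulus denominator $|1-\langle\eta,u\rangle|^{-2(n+1+\alpha)}$ with the nested factor $\overline{\varphi_{\eta}(u)}$ (invgrad). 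Each is obtained by differentiating in $u$ and applying the sharp Bergman-kernel estimates with Lemmas \ref{le:Estimation2} and \ref{le:Estimation4}, precisely as in Lemma \ref{le:BergmanKernelEst}; the dimension $m=n+1+\alpha$ and the exponent $\e=\tfrac12$ are unchanged, only the constants differ. For the second difference $\|K_{*}(u,z)-K_{*}(\zeta,z)\|_E$ the perturbation sits inside the automorphisms $\varphi_u(w)$ versus $\varphi_{\zeta}(w)$, and I would argue as in the final step of Theorem \ref{th:TentBIO}, using $\varrho(\varphi_u(w),\varphi_{\zeta}(w))\lesssim|1-\langle\varphi_u(w),\varphi_{\zeta}(w)\rangle|\thickapprox|1-\langle u,\zeta\rangle|$ together with $\varrho(z,w)\lesssim|1-\langle z,w\rangle|$. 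The genuinely heavier bookkeeping occurs for $T_{\mathrm{invgrad}}$, whose kernel contains both the squared-modulus denominator and the factor $\overline{\varphi_{\eta}(u)}$; estimating its oscillation is the one place appreciably more involved than the tent case, though no idea beyond the sharp estimates of Section \ref{Bergmanspace} is needed. With \eqref{eq:BergmanKernelEst1} and \eqref{eq:BergmanKernelEst2} established, Theorem \ref{th:VBIO} yields $T_{\mathrm{radial}},T_{\mathrm{grad}},T_{\mathrm{invgrad}}\in\mathrm{BIO}_{\alpha}(E)$ and hence the asserted $L^p_{\alpha}$ bounds on the area functions.
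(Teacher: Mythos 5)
Your proposal follows exactly the route the paper intends: the paper's entire proof of this theorem is the remark that it is the same as that of Theorem \ref{th:TentBIO}, and your outline is a faithful (and rather more explicit) execution of that template --- $L^q$-boundedness via $P_{\alpha}$ and the derivative estimate, the size condition via Lemmas \ref{le:Estimation1}, \ref{le:Estimation2} and \ref{le:Estimation4}, and the smoothness condition via the analogue of Lemma \ref{le:BergmanKernelEst}, followed by Theorem \ref{th:VBIO}. No discrepancy with the paper's argument.
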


The proof is the same as that of Theorem \ref{th:TentBIO} and the details are omitted.

\end{document}